\documentclass[12pt,a4paper]{amsart}
\usepackage[english]{babel}
\usepackage[applemac]{inputenc}
\usepackage[T1]{fontenc}
\usepackage{palatino}
\usepackage{amsmath}
\usepackage{amssymb}
\usepackage{amsthm}
\usepackage{amsfonts}
\usepackage{graphicx}
\usepackage[colorlinks = true, citecolor = black]{hyperref}
\pagestyle{headings}
\author{Tuomas Orponen}\thanks{The research was supported by the Finnish Centre of Excellence in Analysis and Dynamics Research}
\title{On the Distance Sets of Self-Similar Sets}
\address{Department of Mathematics and Statistics, University of Helsinki, P.O.B. 68, FI-00014 University of Helsinki, Finland}
\email{tuomas.orponen@helsinki.fi} 
\subjclass[2010]{28A80 (Primary); 28A78, 37C45 (Secondary)}

\newcommand{\R}{\mathbb{R}}
\newcommand{\N}{\mathbb{N}}

\newcommand{\calB}{\mathcal{B}}
\newcommand{\calD}{\mathcal{D}}
\newcommand{\calC}{\mathcal{C}}

\newcommand{\calH}{\mathcal{H}}

\newcommand{\calG}{\mathcal{G}}

\newcommand{\card}{\operatorname{card}}
\newcommand{\Id}{\operatorname{Id}}

\numberwithin{equation}{section}

\theoremstyle{plain}
\newtheorem{thm}[equation]{Theorem}
\newtheorem{lemma}[equation]{Lemma}

\newtheorem{cor}[equation]{Corollary}
\newtheorem{proposition}[equation]{Proposition}

\newtheorem{conjecture}[equation]{Conjecture}

\theoremstyle{definition}
\newtheorem{definition}[equation]{Definition}

\theoremstyle{remark}
\newtheorem{remark}[equation]{Remark}

\addtolength{\hoffset}{-1.15cm}
\addtolength{\textwidth}{2.3cm}
\addtolength{\voffset}{0.45cm}
\addtolength{\textheight}{-0.9cm}

\begin{document}

\begin{abstract} We show that if $K$ is a self-similar set in the plane with positive length, then the distance set of $K$ has Hausdorff dimension one.
\end{abstract}

\maketitle

\section{Introduction}

The \emph{distance set problem} usually refers to two different but closely related questions, one in incidence geometry and the other in geometric measure theory. The incidence geometric formulation, due to P. Erd{\H o}s \cite{E} from 1946, asks to determine the least number of distinct distances spanned by a set of $n \in \N$ points in the plane. The conjectured bound is $\geq cn(\log n)^{-1/2}$ distances. A recent proof by L. Guth and N.H. Katz \cite{GK} comes very close by extracting $\geq cn(\log n)^{-1}$ distances.

The present paper deals with a special case of the geometric measure theoretic version of the distance set problem, formulated by K. Falconer \cite{Fa} in 1985. In \cite{Fa}, Falconer proved that if $B \subset \R^{d}$ is an analytic set and $t < \dim B - d/2 + 1/2$, then the \emph{distance set} $D(B) = \{|x - y| : x,y \in B\}$ has positive $t$-dimensional Hausdorff measure. In particular, we have $\dim D(B) \geq \dim B - d/2 + 1/2$. Here and below, $\dim$ will always refer to Hausdorff dimension. For $d \geq 2$, Falconer also constructed examples of compact sets $K \subset \R^{d}$ with $\dim K = s$ and $\dim D(K) \leq 2s/d$, for any $0 \leq s \leq d$. The question on the sharpness of these bounds and examples became known as Falconer's distance set problem. The constructions in \cite{Fa} show that the bounds given by the following conjecture would be optimal.
\begin{conjecture}[Distance set conjecture]\label{distanceConjecture} Let $B \subset \R^{d}$ be an analytic set. If $\dim B \geq d/2$, then $\dim D(B) = 1$. If $\dim B > d/2$, then $D(B)$ has positive length.
\end{conjecture}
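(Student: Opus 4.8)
The plan is to run the Fourier-analytic programme of Falconer and Mattila, which converts both assertions into a single decay estimate for spherical averages of the Fourier transform of a Frostman measure. First I would apply Frostman's lemma: as $B$ is analytic, for every $s < \dim B$ there is a compactly supported probability measure $\mu$ on $B$ of finite $s$-energy, which I record in its Fourier form
\begin{equation*}
I_s(\mu) = \iint |x - y|^{-s}\,d\mu(x)\,d\mu(y) = c(d,s)\int_{\R^d} |\hat\mu(\xi)|^2\,|\xi|^{s - d}\,d\xi < \infty.
\end{equation*}
The distance set then carries the pushforward $\delta = d_\ast(\mu \times \mu)$ of $\mu \times \mu$ under the distance map $d(x,y) = |x - y|$, a probability measure with $\spt \delta \subset D(B)$, and the whole problem reduces to understanding this one measure on the line.

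For the positive-length half I would estimate $\delta$ in $L^2$ through Mattila's integral,
\begin{equation*}
\|\delta\|_{L^2(\R)}^2 \lesssim \int_1^\infty \sigma_\mu(r)^2\,r^{d - 1}\,dr =: \mathcal{M}_d(\mu), \qquad \sigma_\mu(r) := \int_{S^{d-1}} |\hat\mu(r\omega)|^2\,d\sigma(\omega),
\end{equation*}
so that finiteness of $\mathcal{M}_d(\mu)$ forces $\delta \ll \mathcal{L}^1$ with square-integrable density and hence gives $D(B)$ positive length. A glance at the exponents shows that $\mathcal{M}_d(\mu)$ converges as soon as $\sigma_\mu(r)$ decays faster than $r^{-d/2}$, so everything comes down to an estimate of the shape
\begin{equation*}
\sigma_\mu(r) \lesssim I_s(\mu)\,r^{-\beta(s,d)}, \qquad r \geq 1,
\end{equation*}
with decay exponent $\beta(s,d) > d/2$ for each $s > d/2$. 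This is expected to hold with the sharp exponent $\beta(s,d) = s$ in the relevant range, which would return precisely the threshold $s > d/2$.

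For the dimension half it suffices to prove, for every $t < 1$, finiteness of the $t$-energy $I_t(\delta) = \iint |u - v|^{-t}\,d\delta(u)\,d\delta(v)$, since $I_t(\delta) < \infty$ yields $\dim D(B) \geq t$ and, on letting $t \to 1$, the full conclusion $\dim D(B) = 1$. Mattila's reformulation bounds $I_t(\delta)$ by a weighted integral of the very same averages $\sigma_\mu$, so that only a sub-critical version of the decay estimate is required — with $\beta(s,d)$ merely approaching $d/2$ rather than strictly exceeding it — and under the hypothesis $\dim B \geq d/2$ this again pushes the requirement to the critical spherical-average decay.

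The hard part, and in truth the entire content of the conjecture, is the spherical-average estimate at the critical exponent. The elementary bound, coming from the decay $|\widehat{d\sigma}(\xi)| \lesssim |\xi|^{-(d-1)/2}$ of the sphere, saturates at $\beta(s,d) = (d-1)/2 < d/2$ and so never even reaches criticality; improving it for $s > (d-1)/2$ is a restriction-type problem, of a strength comparable to the restriction conjecture for $S^{d-1}$. Falconer's original threshold $(d+1)/2$ has been driven toward the conjectured $d/2$ by Bourgain, Wolff and Erdo{\u g}an, and by subsequent refinements based on decoupling and restriction theory, yet a positive loss survives at the critical exponent in every dimension, so the reductions above do not close. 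I expect the measure-theoretic steps to be routine; the obstruction is concentrated entirely in this sharp harmonic-analytic inequality, and the contribution of the present paper is to show that for self-similar $K$ the extra scaling structure can be made to stand in for it.
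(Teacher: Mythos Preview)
The statement you are attempting to prove is Conjecture~\ref{distanceConjecture}, and it is labelled a conjecture for a reason: the paper does not prove it, nor does it claim to. Falconer's distance set conjecture remains open in every dimension $d \geq 2$, and the paper's contribution is the special case recorded in Theorem~\ref{main}, namely $\dim D(K) = 1$ for planar self-similar $K$ with $\calH^{1}(K) > 0$. There is therefore no ``paper's own proof'' of this statement to compare your proposal against.

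Your outline is an accurate and well-organised summary of the Falconer--Mattila Fourier-analytic programme, and you are honest about where it breaks: the spherical-average decay $\sigma_{\mu}(r) \lesssim r^{-\beta(s,d)}$ with $\beta(s,d) > d/2$ for all $s > d/2$ is precisely the missing ingredient, and no one knows how to obtain it. So your proposal is not a proof but a reduction to an open harmonic-analytic inequality, which you yourself acknowledge in the final paragraph. That is the correct assessment of the state of the art; it is just not a proof of the conjecture.

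One small comment on your closing sentence: the present paper does \emph{not} substitute self-similar structure for the sharp spherical-average estimate. The argument here is entirely geometric and makes no use of Fourier analysis or Mattila's integral. In the irrational-rotation case it invokes the Hochman--Shmerkin projection theorem, and in the rational-rotation case it builds, via Lemmas~\ref{mainL2} and~\ref{mainL3}, an auxiliary homothetic self-similar subset whose pinned distance set from a suitably chosen far-away point is shown directly to be a generalised Cantor set of large dimension (Lemma~\ref{lemma10} and Proposition~\ref{mm}). So even for the special case the paper does handle, the mechanism is quite different from the one you sketch.
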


Since 1985, the conjecture and its variations have been studied by many authors, including J. Bourgain \cite{Bo}, \cite{Bo2}, T. Wolff \cite{Wo}, B. Erdo{\u g}an \cite{Er}, P. Mattila \cite{Ma1}, Mattila and P. Sj\"olin \cite{MS}, Katz and T. Tao \cite{KT}, T. Mitsis \cite{Mi}, Y. Peres and W. Schlag \cite{PSc}, S. Hofmann and A. Iosevich \cite{HI}, Iosevich and I. {\L}aba \cite{IL}, \cite{IL2}, {\L}aba and S. Konyagin \cite{LG} and Iosevich, M. Mourgoglou and K. Taylor \cite{IMT}. Readers unfamiliar with the problem may wish to consult the introduction in \cite{Er}, which contains a brief summary of most developments up until 2004.

In this paper, we consider the planar case of Conjecture \ref{distanceConjecture} for self-similar sets. Before stating the main result, let us quickly review the best known estimates on the dimension and measure of distance sets of general (analytic) sets $B \subset \R^{2}$. The main result in Wolff's paper \cite{Wo} states that if $\dim B > 3/4$, then $D(B)$ has positive length. In \cite{Bo2}, Bourgain shows that if $\dim B \geq 1$, then $\dim D(B) \geq 1/2 + \varepsilon$ for some absolute constant $\varepsilon > 0$, smaller than $1/2$. We prove:
\begin{thm}\label{main} Let $K \subset \R^{2}$ be a self-similar set with $\calH^{1}(K) > 0$. Then $\dim D(K) = 1$.
\end{thm}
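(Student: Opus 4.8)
The plan is to use the structure of self-similar sets with positive length. By a theorem of Bandt–Graf (or Schief), a self-similar set $K$ satisfying the open set condition has positive and finite Hausdorff measure in its similarity dimension; conversely, if $\calH^1(K) > 0$ then the similarity dimension is $\geq 1$, and if it equals $1$ with positive length the system essentially satisfies OSC. The key dichotomy: either $K$ lies on a line (in which case $K$ contains scaled copies of a positive-measure subset of an interval, and $D(K)$ obviously has dimension one, indeed positive length, via Steinhaus-type arguments on the line), or $K$ is not contained in any line. The non-degenerate case is the heart of the matter.

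**For the non-degenerate case**, I would exploit self-similarity to reduce to a statement about a single projection. Since $K = \bigcup_i \varphi_i(K)$ with contractions $\varphi_i(x) = r_i O_i x + a_i$, and $K$ is not contained in a line, there exist two maps whose fixed points $p, q$ are distinct. For a point $x$ near $p$ and $y$ near $q$, the distance $|x-y|$ is comparable to $|p-q|$ and the map $(x,y) \mapsto |x-y|$ has non-degenerate differential; thus $D(K)$ contains a bi-Lipschitz image of a piece of $\pi_\theta(K \times K)$ for a suitable direction $\theta$, where $\pi_\theta$ is orthogonal projection onto a line. More precisely, one localizes: pick cylinders $K_I = \varphi_I(K)$ and $K_J = \varphi_J(K)$ sitting near $p$ and $q$ respectively at a common small scale; then $D(K) \supset D(K_I \cup K_J) \supset \{|x - y| : x \in K_I, y \in K_J\}$, and on this product the distance function is a smooth submersion, so its image has the same dimension as an orthogonal projection of $K_I \times K_J$, which by self-similarity is a rescaled copy of a projection of $K \times K$. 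Hence it suffices to show $\dim \pi_\theta(K \times K) = 1$ for some $\theta$ — and since $\calH^1(K) > 0$, the set $K \times K$ has Hausdorff dimension (indeed measure) at least $1$ in a strong sense along the $K$-directions.

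**The main obstacle** is precisely establishing that a projection $\pi_\theta$ of $K \times K$ — equivalently, a pinned or linear combination of copies of $K$ — has dimension one for at least one, ideally for a large set, of directions $\theta$. Marstrand's projection theorem gives this for a.e. direction when $\dim(K\times K) \geq 1$, but the distance-set reduction pins us to specific directions determined by the geometry (the line through $p$ and $q$, roughly), so we cannot freely invoke "almost every." This is where self-similarity must be used again: the set of "bad" directions $\theta$ for which $\dim \pi_\theta(K\times K) < 1$ is, by work of Hochman–Shmerkin and the rotational structure of the $O_i$, either empty or highly constrained; if the rotation part of the semigroup is dense in $SO(2)$, then one can rotate a good direction into the required one, while if the rotations are finite, the self-similar set has an "arithmetic" structure on a line in each direction and one argues directly that $\calH^1(K)>0$ forces a projection of full dimension. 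I would organize the proof so that the generic rotation case is handled by a projection/dimension-conservation argument and the finite rotation case is handled by a separate combinatorial argument reducing to the one-dimensional (interval) situation.

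**Finally**, I would assemble these pieces: reduce via self-similarity to a projection statement about $K \times K$ at a fixed direction; handle the degenerate (collinear) case by hand; and split the non-degenerate case according to whether the group generated by the rotation parts $\{O_i\}$ is finite or dense. In all cases the conclusion $\dim D(K) = 1$ follows, noting that we only claim the dimension equals one and not that $D(K)$ has positive length, which is consistent with the known difficulty of the measure version even for self-similar sets.
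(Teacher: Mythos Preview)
Your high-level dichotomy (dense versus finite rotation group) and your use of Hochman--Shmerkin in the irrational case both match the paper. The genuine gap is in your treatment of the finite (rational) rotation case.

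You correctly identify the obstacle: the distance-function linearization pins you to the \emph{specific} direction $e = (p-q)/|p-q|$, and Marstrand only gives full-dimensional projections for almost every direction. Your proposed fix, ``one argues directly that $\mathcal{H}^{1}(K)>0$ forces a projection of full dimension,'' is not an argument: a self-similar set with $\mathcal{H}^{1}(K)>0$ can certainly have exceptional directions $e$ with $\dim \pi_{e}(K) < 1$ (think of the one-dimensional Sierpi\'nski gasket projected vertically). So fixing $p,q$ first and hoping $e$ is good does not work.

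The paper's resolution is the idea you are missing. One does not fix $p,q$ and then analyze the resulting direction; one \emph{chooses} $p,q$ so that the direction is good. Two ingredients make this possible. First, the set $S_{\varepsilon} = \{e : \dim \pi_{e}(K_{\varepsilon}) > 1-\varepsilon\}$ is not merely of full measure but \emph{open} (this uses the absence of rotations after a reduction, so that projections of $K_{\varepsilon}$ are themselves self-similar and the ``good'' condition is stable). Second, the direction set $S(K_{0})$ is \emph{dense} in $S^{1}$; this is where the hypothesis $\mathcal{H}^{1}(K)>0$ actually enters, via a rectifiability dichotomy: if the directions are not dense there is a cone condition forcing $K$ to be $1$-rectifiable, and then Besicovitch--Miller gives an interval in $D(K)$ outright. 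With both facts in hand, one picks $e \in S(K_{0}) \cap S_{\varepsilon}$, takes $x_{o},y_{o}\in K_{0}$ realizing $e$, and then pushes $x_{o}$ far away using the self-similar structure so that circles through $K$ centred at (a preimage of) $x_{o}$ are nearly straight lines in direction $e^{\perp}$; the pinned distance set $D_{x_{o}}(K)$ then contains a generalized Cantor set of dimension $>1-\varepsilon$. None of this is captured by ``arithmetic structure on a line in each direction,'' and without the density-of-directions/openness-of-good-directions mechanism your outline in the rational case does not close.
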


Our method does not yield positive length for $D(K)$ under any assumption, nor can we show that $\dim K \geq 1$ alone would imply $\dim D(K) = 1$. Thus, Conjecture \ref{distanceConjecture} remains open even in the self-similar case. Theorem \ref{main} imposes no 'separation conditions' on the self-similar sets under consideration, but readers familiar with the terminology should note that, according to a result of A. Schief \cite{Sc}, the open set condition is a consequence of the assumption $\calH^{1}(K) > 0$ in case the similarity dimension of the function system generating $K$ equals one. 

The proof of Theorem \ref{main} splits into two completely disjoint parts, according to whether or not the self-similar set $K$ contains an irrational rotation. The proof in the presence of irrational rotations will be very short, but only so because we have the following deep result by M. Hochman and P. Shmerkin at our disposal.
\begin{thm}[Adapted from Corollary 1.7 in \cite{HS}]\label{HSthm} Let $K$ be a self-similar set in the plane satisfying the strong separation condition. Assume that at least one of the similitudes generating $K$ contains an irrational rotation. Then, for any $C^{1}$-mapping $g \colon K \to \R$ without singular points, we have
\begin{displaymath} \dim g(K) = \min\{1,\dim K\}. \end{displaymath}  
\end{thm}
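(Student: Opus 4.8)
The plan is to establish the two inequalities separately, the upper bound being trivial and the lower bound being where the input from \cite{HS} is needed. For the upper bound, note that $g(K)\subset\R$ gives $\dim g(K)\le 1$, while $g$, being $C^{1}$ on the compact set $K$, is Lipschitz there, so $\dim g(K)\le\dim K$; hence $\dim g(K)\le\min\{1,\dim K\}$. Everything below concerns the reverse inequality $\dim g(K)\ge\min\{1,\dim K\}$.

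First I would pass from the set to a measure. Since $K$ satisfies the strong separation condition, it carries a self-similar measure $\mu$ of full dimension $\dim\mu=\dim K$ — e.g.\ the one giving the $i$th similitude weight $r_{i}^{\dim K}$, which has dimension $\dim K$ because separation forces the similarity dimension to coincide with $\dim K$. Since $\dim g(K)\ge\dim g_{*}\mu$, it suffices to prove $\dim g_{*}\mu\ge\min\{1,\dim\mu\}$. Now $g_{*}\mu$ is not self-similar, but it is, scale by scale, a near-linear image of rescaled copies of $\mu$. Indeed, fix $x_{0}\in\spt\mu$ and put $L:=Dg(x_{0})$, a nonzero functional because $g$ has no singular points, say $Ly=c\langle y,\theta\rangle$ with $c\ne0$ and $|\theta|=1$. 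On any cylinder $f_{w}(K)$ containing $x_{0}$ — of diameter comparable to $r_{w}$ — the map $g$ agrees with its linearization $y\mapsto g(x_{0})+L(y-x_{0})$ up to a multiplicative error $o(1)$ as $|w|\to\infty$, and composing this linearization with the similitude $f_{w}$ (orthogonal part $O_{w}$) produces, up to a dilation and a translation, the orthogonal projection onto the direction $O_{w}^{-1}\theta$.

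This is the point at which \cite{HS} enters: since one of the generating similitudes contains an irrational rotation, the rotational parts of the similitudes generate a dense subgroup of $SO(2)$ (that single rotation already does), and Corollary~1.7 of \cite{HS} yields that every orthogonal projection of $\mu$ has dimension $\min\{1,\dim\mu\}$. It remains to transfer this from the linearizations back to $g$. One cannot do so by a naive limiting argument, because the Hausdorff dimension of images is not lower semicontinuous under $C^{1}$ convergence of the maps. Instead one re-runs the proof of Corollary~1.7 (in the $C^{1}$ generality its method supports) with $g$ in place of a linear projection: that proof bounds $\dim g_{*}\mu$ below by a \emph{local entropy average} — an average over a sequence of dyadic scales of the entropy that $g_{*}\mu$ carries at each scale — and at scale $\delta$ the restriction of $g$ to any $\delta$-ball meeting $\spt\mu$ is within an additive $o(\delta)$ of an affine orthogonal projection, which perturbs the scale-$\delta$ entropy negligibly. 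The main obstacle, and the only non-routine part, is to verify that these Taylor-remainder errors do not accumulate over the infinitely many scales entering the average; but insensitivity to exactly such perturbations is the feature of the local-entropy-averages method, so the argument of \cite{HS} goes through and delivers $\dim g_{*}\mu=\min\{1,\dim\mu\}$, hence $\dim g(K)=\min\{1,\dim K\}$.
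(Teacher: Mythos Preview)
The paper does not prove this theorem at all: it is quoted verbatim as a black-box result from Hochman--Shmerkin \cite{HS} and is used only to derive Corollary~\ref{irrat}. There is therefore no proof in the paper to compare your proposal against.

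As for the proposal itself, the upper bound is fine, and your description of the mechanism behind the lower bound is accurate in outline: on small cylinders $g$ is well approximated by affine maps whose linear parts are, up to scaling, orthogonal projections in directions $O_{w}^{-1}\theta$, and the irrational-rotation hypothesis makes these directions equidistribute; the local-entropy-averages framework of \cite{HS} then tolerates the $o(\delta)$ Taylor remainders at each scale. But what you have written is not a proof --- it is a summary of \emph{why} the Hochman--Shmerkin argument succeeds, with the actual work deferred to the sentence ``one re-runs the proof of Corollary~1.7 \ldots\ so the argument of \cite{HS} goes through''. That step is precisely the content of the theorem you are trying to establish, so invoking it is no different from citing \cite{HS} directly, which is what the paper does. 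If you intend a self-contained proof, you must carry out the entropy estimates (in particular, verify the lower semicontinuity of the scale-$\delta$ entropy under the $C^{1}$ perturbation and control the ergodic average); if you intend only to use the result, the citation suffices and no proof is called for.
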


The assumption on $g$ simply means that $g$ is continuously differentiable in a neighborhood of $K$ and $\nabla g(x) \neq 0$ for $x \in K$. From this theorem we derive:
\begin{cor}\label{irrat} Let $K$ be a self-similar set in the plane. Assume that at least one of the similitudes generating $K$ contains an irrational rotation. Then, we may find a point $x \in K$ such that $\dim D_{x}(K) = \min\{1,\dim K\}$, where $D_{x}(K)$ is the \emph{pinned distance set}
\begin{displaymath} D_{x}(K) = \{|x - y| : y \in K\}. \end{displaymath}
\end{cor}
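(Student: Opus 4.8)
The plan is to derive Corollary \ref{irrat} from Theorem \ref{HSthm} by first manufacturing, inside $K$, a strongly separated self-similar set of nearly full dimension that still carries an irrational rotation, and then applying Theorem \ref{HSthm} to it with the $C^{1}$ function $y \mapsto |x - y|$.

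We may assume $\dim K > 0$ (otherwise any $x \in K$ works, as $D_{x}(K) \subseteq [0,\infty)$), so $K$ has at least two points and $\operatorname{diam} K > 0$. Write $\{f_{i}\}_{i \in \calI}$ for the generating similitudes and let $f_{i_{0}}$ have rotation an irrational multiple of $\pi$. The key preliminary step is to fix, once and for all, a single point $x \in K \setminus f_{i_{0}}(K)$; such a point exists because $\operatorname{diam} f_{i_{0}}(K) < \operatorname{diam} K$, so $f_{i_{0}}(K)$ is a proper compact subset of $K$ and $x$ has positive distance to it. Then $g(y) := |x - y|$ is $C^{\infty}$ with $\nabla g(y) = (y - x)/|y - x| \neq 0$ on a fixed neighbourhood of $f_{i_{0}}(K)$, so $g$ is a legitimate test function for Theorem \ref{HSthm} applied to \emph{any} subset of $f_{i_{0}}(K)$, while at the same time $x$ genuinely lies in $K$, as Corollary \ref{irrat} demands.

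Now fix $\varepsilon > 0$. The main technical input is the (known) fact that the dimension of a self-similar set is exhausted from within by strongly separated self-similar subsets: there is a finite collection $W$ of words over $\calI$ such that the maps $\{f_{w}\}_{w \in W}$ have pairwise disjoint images $f_{w}(K)$, and the similarity dimension of $\{f_{w}\}_{w \in W}$ — which, under strong separation, equals the Hausdorff dimension of the attractor — exceeds $\dim K - \varepsilon$; moreover one may insist that $W$ contains some power $i_{0}^{n}$ of the letter $i_{0}$, at the cost of discarding only boundedly many cylinders clustered near $f_{i_{0}^{n}}(K)$, which affects neither the separation nor the dimension bound. Prepend $i_{0}$ to every word of $W$ to obtain the system $\Phi := \{f_{i_{0} w}\}_{w \in W}$. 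Since $f_{i_{0}}$ is injective, the images $f_{i_{0}}(f_{w}(K))$ are still pairwise disjoint, so $\Phi$ satisfies the strong separation condition; its attractor $K_{\varepsilon} = f_{i_{0}}(\text{attractor of } \{f_{w}\}_{w \in W})$ lies inside $f_{i_{0}}(K) \subseteq K$ and, as $f_{i_{0}}$ is bi-Lipschitz, satisfies $\dim K_{\varepsilon} > \dim K - \varepsilon$; and $\Phi$ contains $f_{i_{0}^{n+1}}$, whose rotation is $(n+1)$ times that of $f_{i_{0}}$, hence again an irrational multiple of $\pi$.

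Finally, apply Theorem \ref{HSthm} to the planar self-similar set $K_{\varepsilon}$ (strongly separated, with an irrational rotation) and the function $g$ (which is $C^{1}$ without singular points near $K_{\varepsilon} \subseteq f_{i_{0}}(K)$, since $x \notin f_{i_{0}}(K)$): this gives $\dim D_{x}(K_{\varepsilon}) = \dim g(K_{\varepsilon}) = \min\{1, \dim K_{\varepsilon}\} > \min\{1, \dim K\} - \varepsilon$. Since $D_{x}(K_{\varepsilon}) \subseteq D_{x}(K)$ and $x$ was chosen independently of $\varepsilon$, letting $\varepsilon \downarrow 0$ yields $\dim D_{x}(K) \geq \min\{1, \dim K\}$; the opposite inequality is immediate because $y \mapsto |x - y|$ is $1$-Lipschitz with values in $\R$. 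Hence $\dim D_{x}(K) = \min\{1, \dim K\}$ for this single $x$. The one genuinely non-routine ingredient is the approximation statement in the third paragraph — that strongly separated self-similar subsets of $K$ attain dimensions arbitrarily close to $\dim K$ — which has to be established (or cited) on its own; everything else is bookkeeping with compositions of similarities, and the device of placing every $K_{\varepsilon}$ inside the fixed cylinder $f_{i_{0}}(K)$ is precisely what promotes the $\varepsilon$-approximate bound to the exact, single-point conclusion.
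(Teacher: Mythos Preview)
Your overall strategy parallels the paper's: reduce to Theorem \ref{HSthm} via the approximation in Lemma \ref{mainL1}, and arrange that the distance map has no singular point on the relevant set. The paper achieves the latter by pulling the singularity outside $B_{0}$, replacing $y \mapsto |x_{o}-y|$ with $y \mapsto |\psi_{B}^{-1}(x_{o})-y|$; you instead push every $K_{\varepsilon}$ into a fixed cylinder $f_{i_{0}}(K)$ that avoids your chosen $x$. Both devices are sound in principle.

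There is, however, a genuine error in your construction of $K_{\varepsilon}$. You assert that the attractor of the prepended system $\Phi = \{f_{i_{0}}\circ f_{w}\}_{w\in W}$ equals $f_{i_{0}}(A)$, where $A$ is the attractor of $\{f_{w}\}_{w\in W}$, and then deduce $\dim K_{\varepsilon} = \dim A$ by bi-Lipschitz invariance. This identification is false: the attractor $B$ of $\Phi$ satisfies $B = \bigcup_{w} f_{i_{0}}f_{w}(B)$, while $f_{i_{0}}(A) = \bigcup_{w} f_{i_{0}}f_{w}(A)$, and these coincide only when $A = B$. Concretely, if $W = \{1,2\}$ with $r_{1}=r_{2}=r_{i_{0}}=1/3$, then $\dim A = \log 2/\log 3$ but the similarity dimension of $\Phi$ is $\log 2/\log 9 = \tfrac{1}{2}\dim A$. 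So the bound $\dim K_{\varepsilon} > \dim K - \varepsilon$ is unjustified as written.

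Two repairs work. The clean one: the set you actually want is $f_{i_{0}}(A)$ itself, which \emph{is} self-similar---but for the \emph{conjugated} system $\{f_{i_{0}}\, f_{w}\, f_{i_{0}}^{-1}\}_{w\in W}$, not for $\Phi$. That system inherits the contraction ratios, rotation angles, and strong separation of $\{f_{w}\}$, has attractor $f_{i_{0}}(A)\subset f_{i_{0}}(K)\subset K$ of dimension exactly $\dim A$, and carries an irrational rotation once some $i_{0}^{n}\in W$. Alternatively, if you keep $\Phi$, you must add the observation that the words in $W$ may be taken long enough that prepending the single letter $i_{0}$ depresses the similarity dimension by less than $\varepsilon$ (the similarity dimension $s$ of $\Phi$ solves $\sum_{w}(r_{i_{0}}r_{w})^{s}=1$, and $s\to\dim A$ as $\max_{w}r_{w}\to 0$); this is true but is a step you did not supply. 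The paper's Remark after Lemma \ref{mainL1} avoids the whole issue by post-composing a \emph{single} map with $\psi_{j}$ rather than pre-composing all of them.
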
 

It may seem surprising at first sight that Corollary \ref{irrat} can be deduced from Theorem \ref{HSthm}, which, unlike Corollary \ref{irrat}, imposes the strong separation condition on the set $K$. The explanation is simple: an easy argument given in \S3 shows that all separation assumptions can also be dropped from Theorem \ref{HSthm}. We point out that the conclusion in Corollary \ref{irrat} is notably stronger than the one in Theorem \ref{main}. In particular, Corollary \ref{irrat} implies the first part of Conjecture \ref{distanceConjecture} as such. Unfortunately, the presence of irrational rotations is essential for Theorem \ref{HSthm}, so there is no hope to apply Hochman and Shmerkin's result directly in the case with only rational rotations. Consequently, most of what follows will be devoted to the proof of Theorem \ref{main} in the rational case.
\section{Acknowledgements}

I am grateful to my advisor Prof. Pertti Mattila for suggesting the problem. I would also like thank an anonymous referee for making numerous wise remarks on the paper's exposition.  

\section{Definitions, and the case with irrational rotations}

\begin{definition}[Similitudes and types] A mapping $\psi \colon \R^{2} \to \R^{2}$ is a \emph{contractive similitude on $\R^{2}$}, or simply \emph{similitude}, if there exists a \emph{contraction ratio} $r \in (0,1)$ such that $|\psi(x) - \psi(y)| = r|x - y|$ for all $x,y \in \R^{2}$. Then $\psi$ may be then written in the form $\psi(x) = rROx + w$, where
\begin{itemize}
\item[(i)] $R = \Id$, or $R$ is the \emph{reflection} $R(z) = \bar{z}$,
\item[(ii)] $O$ is, in complex notation, the \emph{rotation} $O(z) = e^{2\pi i \theta}z$ for some $\theta \in [0,1)$,
\item[(iii)] $w \in \R^{2}$ is a \emph{translation vector}.
\end{itemize}
If $\psi(x) = rROx + w$, we say that the \emph{type} of $\psi$ is $[O,R]$ and write $[\psi] = [O,R]$. If the angle $\theta \in [0,1)$ from the definition of $O$ is a rational number, we say that $[O,R]$ is a \emph{rational type}. The similitude $\psi$ \emph{contains no rotations or reflections}, if $[\psi] = [\Id,\Id]$.
\end{definition} 
Fix a collection $\{\psi_{1},\ldots,\psi_{q}\}$ of contractive similitudes on $\R^{2}$. A classical result of Hutchinson \cite{Hu} states that there exists a unique compact set $K \subset \R^{2}$ satisfying
\begin{displaymath} K = \bigcup_{j = 1}^{q} \psi_{j}(K). \end{displaymath}
The set $K$ is called \emph{the self-similar set generated by the similitudes $\{\psi_{1},\ldots,\psi_{q}\}$}. If not otherwise specified, the letter $K$ will always refer to the the self-similar set generated by the similitudes $\{\psi_{1},\ldots,\psi_{q}\}$, and we will assume that $\calH^{1}(K) > 0$. For the rest of the paper, let $B_{0} := B(0,1/2) := \{x \in \R^{2} : |x| \leq 1/2\}$, and assume that $\psi_{j}(B_{0}) \subset B_{0}$ for $1 \leq j \leq q$ (this can be achieved by scaling all the translation vectors by some common factor, which has no impact on $\dim D(K)$ or any of the other results below). 

\begin{definition}[Generation $n$ balls]\label{genN} Set $\calB_{0} = \{B_{0}\}$, and define the collections $\calB_{n}$ recursively by
\begin{displaymath} \calB_{n} := \{\psi_{j}(B) : B \in \calB_{n - 1},\: 1 \leq j \leq q\}. \end{displaymath} 
Note that the union of these \emph{generation $n$ balls of $K$ associated with $\{\psi_{1},\ldots,\psi_{q}\}$} forms a cover for the set $K$ for any $n \in \N$. If $B \in \calB_{n}$ with $n \geq 1$, there exists a -- not necessarily unique -- sequence of mappings $(\psi_{i_{1}},\ldots,\psi_{i_{n}})$ with $i_{j} \in \{1,\ldots,q\}$ such that $B = \psi_{i_{1}} \circ \ldots \circ \psi_{i_{n}}(B_{0})$. The composition $\psi_{i_{1}} \circ \ldots \circ \psi_{i_{n}}$ is again a similitude on $\R^{2}$ with contraction ratio $d(B)/d(B_{0}) = d(B)$, and we write $\psi_{B} := \psi_{i_{1}} \circ \ldots \circ \psi_{i_{n}}$; if the sequence $(\psi_{i_{1}},\ldots,\psi_{i_{n}})$ is not uniquely determined by $B$, we pick any admissible sequence in the definition of $\psi_{B}$. If $\tilde{\calB} \subset \bigcup_{n \in \N} \calB_{n}$ is any subcollection of generation $n$ balls, for various $n$ perhaps, then the self-similar set $\tilde{K}$ generated by the similitudes $\{\psi_{B} : B \in \tilde{\calB}\}$ is a subset of $K$; this subset may be proper even if $\tilde{\calB} = \bigcup_{n \in \N} \calB_{n}$. Note that the first generation balls $\tilde{\calB}_{1}$ of $\tilde{K}$ (associated with the natural similitudes $\{\psi_{B} : B \in \tilde{\calB}\}$) agree with the balls in $\tilde{\calB}$. 
\end{definition}

\begin{definition}[Very strong separation] We say that the self-similar set $K$ satisfies the \emph{very strong separation condition}, if the first generation balls of $K$ are disjoint. This degree of separation will be convenient to us, and it is stronger than the commonly used notion of 'strong separation', which merely requires that $\psi_{i}(K) \cap \psi_{j}(K) = \emptyset$ for $1 \leq i \neq j \leq q$ (the strong separation assumption in Theorem \ref{HSthm} refers to this mainstream definition). Under either of these conditions, it is well-known, see \cite{Hu}, that $\dim K$ equals the unique number $s \in (0,2]$ solving the equation
\begin{displaymath} \sum_{j = 1}^{q} r_{j}^{s} = 1. \end{displaymath} 
\end{definition}
The strong separation assumption is not needed in Theorem \ref{HSthm} because of

\begin{lemma}\label{mainL1} Let $K$ be a self-similar set in $\R^{2}$. Then, for any $\varepsilon > 0$, there exists a self-similar set $K_{\varepsilon} \subset K$ satisfying the very strong separation condition and with 
\begin{displaymath} \dim K_{\varepsilon} > \dim K - \varepsilon. \end{displaymath}
\end{lemma}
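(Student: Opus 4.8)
The plan is to find, for any $\varepsilon > 0$, a deep generation $n$ and a subcollection $\tilde{\calB} \subset \calB_{n}$ of pairwise disjoint generation $n$ balls whose associated self-similar set $K_\varepsilon$ (generated by $\{\psi_B : B \in \tilde{\calB}\}$) has dimension exceeding $\dim K - \varepsilon$. Since the balls in $\tilde{\calB}$ are pairwise \emph{disjoint} (closed balls, so we in fact want them separated, which for finitely many disjoint closed balls is automatic), the resulting $K_\varepsilon$ satisfies the very strong separation condition, and by the remark after the very strong separation definition its dimension is the unique $s$ solving $\sum_{B \in \tilde{\calB}} d(B)^{s} = 1$. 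So the entire task reduces to a counting statement: we must choose $n$ and the subfamily $\tilde{\calB}$ so that this sum, evaluated at $s = \dim K - \varepsilon$, is at least $1$, equivalently $\sum_{B \in \tilde{\calB}} d(B)^{\dim K - \varepsilon} \geq 1$.

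The key input is that $\calH^{1}(K) > 0$, and more specifically that $K$ has some definite dimension $s_0 := \dim K \in [1,2]$ (by a theorem of Falconer, or directly, $\calH^1(K)>0$ forces $\dim K \geq 1$; in any case $s_0 > 0$ is all we need). First I would fix a large generation $n$ and consider the full family $\calB_n$, for which $\sum_{B \in \calB_n} d(B)^{t} = \left(\sum_{j=1}^q r_j^{t}\right)^{n}$; at $t = s_0$ this sum is comparable to $1$ in the separated case, but in general without separation we only have a lower bound of the Hutchinson type, and we must instead argue via a covering/energy argument that $K$ cannot be efficiently covered by far fewer than $d(B)^{-s_0+\varepsilon/2}$-many balls of a given small radius. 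Concretely, using that $\dim K = s_0$, for any $t < s_0$ and all sufficiently fine covers of $K$ by generation $n$ balls one has $\sum_{B \in \calB_n} d(B)^{t} \gtrsim_t 1$. Now the obstruction is overlap: the balls in $\calB_n$ need not be disjoint. To handle this I would pass to a maximal pairwise disjoint (equivalently, $5r$-separated, Vitali-type) subfamily $\tilde{\calB} \subset \calB_n$; by the Vitali covering lemma the $5$-fold dilates of the balls in $\tilde{\calB}$ still cover $\bigcup \calB_n \supset K$, so $\sum_{B \in \tilde{\calB}} (5 d(B))^{t} \gtrsim \sum_{B\in\calB_n} d(B)^t \gtrsim_t 1$ for $t < s_0$, hence $\sum_{B \in \tilde{\calB}} d(B)^{t} \gtrsim_t 5^{-t} \gtrsim 1$. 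This is not yet normalized to equal $1$, but: running this at $t = s_0 - \varepsilon/2$ and then choosing $n$ large enough that the (uniformly bounded below) quantity genuinely exceeds $1$ — here one uses that refining generations multiplies these sums, so by taking $n$ a large multiple of some base generation one boosts $\sum d(B)^{t}$ above any fixed threshold while keeping $t$ fixed — we get $\sum_{B \in \tilde{\calB}} d(B)^{s_0 - \varepsilon/2} \geq 1$. Then the dimension $s$ of the very strongly separated $K_\varepsilon$ generated by $\tilde{\calB}$ satisfies $s \geq s_0 - \varepsilon/2 > s_0 - \varepsilon$, as desired.

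I expect the main obstacle to be making the counting lower bound $\sum_{B\in\calB_n} d(B)^t \gtrsim_t 1$ for $t < \dim K$ honest in the presence of arbitrary overlaps. In the separated case it is an equality up to constants via Hutchinson, but in general one must deduce it from $\dim K = s_0$: if it failed, there would be generations $n$ along which $\calB_n$ provides covers of $K$ witnessing $\mathcal{H}^t_\infty(K) \to 0$, and combined with the self-similar refinement structure this would force $\dim K \leq t < s_0$, a contradiction. One has to be a little careful that generation $n$ balls form a \emph{genuine} cover of $K$ at scale $\max_B d(B) = (\max_j r_j)^n \to 0$, which is stated in Definition \ref{genN}, and that the exponential multiplicativity under composition of similitudes lets one iterate. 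A secondary, purely bookkeeping issue is the normalization to make the defining sum exactly $\geq 1$ rather than merely $\gtrsim 1$; this is resolved, as indicated, by taking $n$ to be a large multiple of a fixed generation, since $\sum_{B \in \calB_{mn}} d(B)^t = \bigl(\sum_{B \in \calB_n} d(B)^t\bigr)^m$ and a Vitali subfamily of a family with sum $\geq c$ has sum $\geq 5^{-t} c$, so choosing $m$ with $(5^{-t} \cdot (\text{sum at one stage}))$ iterated exceeds $1$ suffices — or, more cleanly, one applies the Vitali reduction directly at a generation where the full sum already exceeds $5^{t}$.
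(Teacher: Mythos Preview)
Your overall strategy coincides with the paper's: extract a disjoint subfamily $\tilde{\calB}\subset\calB_n$ via the $5r$-covering lemma, and compute the dimension of the very strongly separated self-similar set generated by $\{\psi_B:B\in\tilde{\calB}\}$ via $\sum_{B\in\tilde{\calB}} d(B)^{s}=1$. The difference is in how you push $\sum_{B\in\tilde{\calB}} d(B)^{t}$ above $1$ for $t<\dim K$, and there your argument has a genuine gap.

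The inequality you assert, $\sum_{B\in\tilde{\calB}} (5d(B))^{t}\gtrsim \sum_{B\in\calB_n} d(B)^{t}$, is not a consequence of the $5r$-covering lemma. Vitali only says that the $5$-dilates of $\tilde{\calB}$ cover $\bigcup\calB_n$; it gives no control on how many balls of $\calB_n$ fall into a single $5B$. If, say, all balls in $\calB_n$ happen to coincide (or nearly coincide), the left side is a single term while the right side has $q^n$ terms. The same objection kills your boosting claim that ``a Vitali subfamily of a family with sum $\geq c$ has sum $\geq 5^{-t}c$,'' so the iteration in the last paragraph does not repair the problem. What Vitali \emph{does} give you is that $\{5B:B\in\tilde{\calB}\}$ is a cover of $K$ of mesh at most $5(\max_j r_j)^{n}\to 0$. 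Since $t<\dim K$ forces $\calH^{t}(K)=\infty$, one has $\calH^{t}_{\delta}(K)\to\infty$ as $\delta\to 0$, and therefore
\[
5^{t}\sum_{B\in\tilde{\calB}} d(B)^{t}=\sum_{B\in\tilde{\calB}} d(5B)^{t}\ \geq\ \calH^{t}_{5(\max_j r_j)^{n}}(K)\ \longrightarrow\ \infty
\]
as $n\to\infty$. This is exactly the paper's argument: it yields $\sum_{B\in\tilde{\calB}} d(B)^{t}>1$ for all large $n$ in one stroke, with no need for the multiplicativity or iteration you invoke. (Incidentally, the hypothesis $\calH^{1}(K)>0$ plays no role here; the lemma holds for any self-similar $K$.)
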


\begin{proof} Define the collections $\calB_{n}$ as above, with $B_{0} = B(0,1/2)$ and so on. For $n \in \N$, use the $5r$-covering lemma, see \cite[Lemma 7.3]{Ru}, to find a subcollection $\calD_{n} \subset \calB_{n}$ of disjoint balls with the property that
\begin{equation}\label{form1} K \subset \bigcup_{B \in \calB_{n}} B \subset \bigcup_{B \in \calD_{n}} 5B. \end{equation} 
For any $n \in \N$, the similitudes $\{\psi_{B} : B \in \calD_{n}\}$ generate a self-similar set $K_{n}$ contained in $K$ and satisfying the very strong separation condition. Since $d(B)$ is the contraction ratio of the similitude $\psi_{B}$, the dimension of $K_{n}$ equals the unique number $s_{n} \in [0,\dim R]$ satisfying
\begin{equation}\label{dimsum} \sum_{B \in \calD_{n}} d(B)^{s_{n}} = 1. \end{equation} 
On the other hand, the balls $5B$, $B \in \calD_{n}$, cover the original set $K$, so the very definition of Hausdorff dimension implies that
\begin{displaymath} 5^{s} \sum_{B \in \calD_{n}} d(B)^{s} = \sum_{B \in \calD_{n}} d(5B)^{s} \to \infty \end{displaymath} 
as $n \to \infty$ for any $s < \dim K$. This forces $s_{n} \to \dim K$ as $n \to \infty$. \end{proof}

\begin{remark} If one of the similitudes generating $K$, say $\psi_{j}$, contains an irrational rotation (that is, $[\psi]$ is not a rational type) then the similitudes generating $K_{\varepsilon}$ can always be chosen so that one of them contains an irrational rotation as well. Indeed, suppose that the similitudes in $\Psi_{n} := \{\psi_{B} : B \in \calD_{n}\}$, as defined in the previous lemma, only contain rational rotations for large enough $n \in \N$. Then, for $n \in \N$, single out any one of the similitudes $\psi_{B}$, $B \in \calD_{n}$, and consider the collection $\tilde{\Psi}_{n}$, where this one similitude has been replaced by $\psi_{B} \circ \psi_{j}$. Then $\psi_{B} \circ \psi_{j} \in \tilde{\Psi}_{n}$ contains an irrational rotation. Moreover, the self-similar set $\tilde{K}_{n}$ generated by $\tilde{\Psi}_{n}$ still satisfies the very strong separation condition, and we also have $\dim \tilde{K}_{n} \to \dim K$, since (at least if $\dim R > 0$) the effect of any single term on the sum on line \eqref{dimsum} becomes negligible for $n \in \N$ large enough.
\end{remark}

\begin{proof}[Proof of Corollary \ref{irrat}] First, let us use the previous lemma and remark to rid Theorem \ref{HSthm} of the strong separation assumption: if $K \subset \R^{2}$ is a self-similar set containing an irrational rotation and $\varepsilon > 0$, find a self-similar set $K_{\varepsilon} \subset K$, containing an irrational rotation, satisfying the very strong separation condition, and with $\dim K_{\varepsilon} > \dim K - \varepsilon$. Then, for any $C^{1}$-mapping $g$ on $K$ without singular points, we have
\begin{displaymath} \dim g(K) \geq \dim g(K_{\varepsilon}) = \min\{1,\dim K_{\varepsilon}\} \geq \min\{1,\dim K - \varepsilon\}, \end{displaymath} 
which means that $\dim g(K) = \min\{1,\dim K\}$. To prove Corollary \ref{irrat}, we may assume that $\dim K > 0$. Then, for large enough $n \in \N$, we can choose a point $x_{o} \in K$ that is not covered by some ball $B \in \calB_{n}$. Define the $C^{1}$-mapping $g \colon B_{0} \to \R$ by $g(y) = |\psi_{B}^{-1}(x_{o}) - y|$. Now $\psi_{B}^{-1}(x_{o}) \notin B_{0}$, so $g$ has no singular points in $K \subset B_{0}$, and we may infer that $\dim g(K) = \min\{1,\dim K\}$. Denoting by $r = d(B) \in (0,1)$ the contraction ratio of $\psi_{B}$ and noticing that $\psi_{B}(K) \subset K$, we have
\begin{align*} D_{x_{o}}(K) & \supset \{|x_{o} - z| : z \in \psi_{B}(K)\} = \{|x_{o} - \psi_{B}(y)| : y \in K\}\\
& = \{r|\psi_{B}^{-1}(x_{o}) - y| : y \in K\} = r \cdot g(K). \end{align*} 
This proves that $\dim D_{x_{o}}(K) = \min\{1, \dim K\}$.
\end{proof}

\section{The Case with only rational rotations}

In essence, the case with only rational rotations can be reduced to the case with \textbf{no} rotations. If $\dim K > 1$, this would be literally true, but we are only assuming that $\calH^{1}(K) > 0$, and this causes some minor technical issues. \begin{lemma}\label{typeId} Let $[O,R]$ be a rational type. Then there exists $n \in \N$ such that if $\{\eta_{1},\ldots,\eta_{n}\}$ are similitudes of type $[O,R]$, then $[\eta_{1} \circ \ldots \circ \eta_{n}] = [\Id,\Id]$. 
\end{lemma}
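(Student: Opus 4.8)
The plan is to reduce the assertion to an elementary computation with the orthogonal parts of the similitudes involved, the point being that contraction ratios and translation vectors are irrelevant to the type of a composition. Concretely, if $\eta_{1},\ldots,\eta_{n}$ all have type $[O,R]$, write $\eta_{i}(x) = r_{i}(RO)x + w_{i}$, where now $RO$ denotes the fixed orthogonal transformation of $\R^{2}$ common to all the $\eta_{i}$. Composing and using that scalars commute with everything, one sees by an easy induction that the linear part of $\eta_{1}\circ\cdots\circ\eta_{n}$ equals $(r_{1}\cdots r_{n})(RO)^{n}$. Hence, once we choose $n$ so that $(RO)^{n} = \Id$, we get $\eta_{1}\circ\cdots\circ\eta_{n}(x) = (r_{1}\cdots r_{n})x + w$ for some $w \in \R^{2}$, i.e.\ $[\eta_{1}\circ\cdots\circ\eta_{n}] = [\Id,\Id]$. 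So everything comes down to finding such an $n$ from the type $[O,R]$ alone.

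Next I would split into the two possibilities for $R$, writing $O(z) = e^{2\pi i \theta}z$ in complex notation with $\theta\in[0,1)$ rational, say $\theta = p/N$. If $R = \Id$, then $(RO)^{n} = O^{n}$ is the rotation by $n\theta$, which is the identity whenever $N \mid n$; thus $n = N$ works. If $R$ is the reflection $z\mapsto\bar z$, then $(RO)(z) = \overline{e^{2\pi i\theta}z} = e^{-2\pi i\theta}\bar z$, and a one-line computation gives $(RO)^{2}(z) = e^{-2\pi i\theta}\overline{e^{-2\pi i\theta}\bar z} = z$, so $(RO)^{2} = \Id$ and $n = 2$ works; note the rationality of $\theta$ is not even used in this case. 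If one prefers a single $n$ serving every rational type, then $n = 2N$ works, being both a multiple of $N$ and even.

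Every step here is a direct calculation, so there is no real obstacle; the only thing that needs care is the bookkeeping behind the definition of the type, namely checking that (a) the type of a composition of similitudes is read off from the ordered product of their orthogonal parts, and (b) since all the $\eta_{i}$ share the common type $[O,R]$, those orthogonal parts coincide, so the product collapses to the single power $(RO)^{n}$. With this in place, the two case computations finish the argument.
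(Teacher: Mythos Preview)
Your argument is correct and follows essentially the same route as the paper: both split on whether $R$ is the identity or the reflection, showing $n=2$ suffices in the reflection case (via the identity $(RO)^{2}=\Id$) and that $n$ equal to the denominator of $\theta$ works when $R=\Id$. The only cosmetic difference is that you first isolate the general fact that the orthogonal part of $\eta_{1}\circ\cdots\circ\eta_{n}$ is $(RO)^{n}$ and then compute that power, whereas the paper composes two explicit similitudes in the reflection case directly.
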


\begin{proof} Write $O(z) = e^{2\pi i \theta}z$, where $\theta = k/m \in [0,1)$. If $R(z) = \bar{z}$, the claim is valid with $n = 2$. Indeed, if $\eta_{1}(x) = r_{1}e^{2\pi i \theta}\bar{z} + w_{1}$ and $\eta_{2}(x) = r_{2}e^{2\pi i\theta}\bar{z} + w_{2}$ for some $r_{1},r_{2} \in (0,1)$ and $w_{1},w_{2} \in \R^{2}$, we have
\begin{displaymath} \eta_{1} \circ \eta_{2}(z) = r_{1}e^{2\pi i\theta}\overline{(r_{2}e^{2\pi i \theta}\bar{z} + w_{2})} + w_{1} = r_{1}r_{2}z + (r_{1}e^{2\pi i\theta}\bar{w_{2}} + w_{1}), \end{displaymath} 
which means that $[\eta_{1} \circ \eta_{2}] = [\Id,\Id]$. If $R = \Id$, then any composition of the form $\eta_{1} \circ \ldots \circ \eta_{n}$ with $[\eta_{j}] = [O,R]$, $1 \leq j \leq m$, has type $[\Id,\Id]$, and the claim holds with $n = m$.
\end{proof}

The following lemma bears close resemblance to Proposition 6 in \cite{PS} but does not seem to follow from it directly:
\begin{lemma}\label{mainL2} Let $K \subset \R^{2}$ be a self-similar set in $\R^{2}$ generated by similitudes with only rational types. Then, for any  $\varepsilon > 0$, there exists a self-similar set $K_{\varepsilon} \subset K$ satisfying the very strong separation condition, generated by similitudes containing no rotations or reflections, and with $\dim K_{\varepsilon} > \dim K - \varepsilon$.
\end{lemma}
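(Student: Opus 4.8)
The plan is to treat first the case where $K$ satisfies the very strong separation condition, and then deduce the general case. Indeed, for a general $K$ with rational types and a given $\varepsilon > 0$, Lemma~\ref{mainL1} provides a very strongly separated self-similar subset $K' \subset K$ with $\dim K' > \dim K - \varepsilon/2$, still generated by similitudes of rational types since rational types are closed under composition; applying the separated case to $K'$ with $\varepsilon/2$ then yields the required $K_{\varepsilon} \subset K' \subset K$. So assume from now on that $K$ is very strongly separated, whence $\dim K = s_{0}$, the unique solution of $\sum_{j} r_{j}^{s_{0}} = 1$. I will use two standard facts: for every $n$ the balls $\psi_{u}(B_{0})$, with $u$ ranging over words of length $n$, are pairwise disjoint (the usual nesting argument, using $\psi_{j}(B_{0}) \subset B_{0}$ and disjointness of the first generation balls); and consequently any subcollection $\Phi$ of a single generation $\calB_{n}$ generates a very strongly separated self-similar subset of $K$ whose dimension is the unique $s$ with $\sum_{B \in \Phi} d(B)^{s} = 1$. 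It therefore suffices to produce, for $N$ large, such a $\Phi$ whose balls all satisfy $[\psi_{B}] = [\Id,\Id]$ and with $\sum_{B \in \Phi} d(B)^{s_{0} - \varepsilon} > 1$.

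First I would record that the group $G$ of types generated by $[\psi_{1}],\ldots,[\psi_{q}]$ is finite: the rotation angles involved are rational with denominators dividing some common $M \in \N$, so $G$ embeds in a group of order $2M$. Put $p := \card G$. Fix $N \in \N$ and, for a word $u = (i_{1},\ldots,i_{N})$, write $r_{u} := r_{i_{1}}\cdots r_{i_{N}}$ for the contraction ratio of $\psi_{u}$ (so that $r_{u} = d(B_{u})$) and $\tau(u) := [\psi_{u}] \in G$. By multiplicativity of the ratios, $\sum_{|u| = N} r_{u}^{s_{0}} = \bigl(\sum_{j} r_{j}^{s_{0}}\bigr)^{N} = 1$, so distributing this mass over the $p$ possible types lets us pigeonhole a single type $T = T_{N} \in G$ and a set $W$ of words of length $N$, all of type $T$, with $\sum_{u \in W} r_{u}^{s_{0}} \geq 1/p$. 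Let $d := \operatorname{ord}(T)$, a divisor of $p$, and let $\Phi$ consist of the balls $B_{v}$ for $v = u_{1}\cdots u_{d}$ running over concatenations of $u_{1},\ldots,u_{d} \in W$. Then $\Phi \subset \calB_{Nd}$ is a subcollection of one generation, and $[\psi_{v}] = T^{d} = [\Id,\Id]$ for every $v$, so every $\psi_{B}$, $B \in \Phi$, contains no rotations or reflections.

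It remains to check the dimension lower bound. Concatenation is injective on $W^{d}$ and, by very strong separation, distinct words of length $Nd$ yield distinct balls, so $\Phi$ is in bijection with $W^{d}$ and $\sum_{B \in \Phi} d(B)^{s_{0}} = \bigl(\sum_{u \in W} r_{u}^{s_{0}}\bigr)^{d} \geq p^{-d} \geq p^{-p} =: c_{0} > 0$, a constant not depending on $N$. On the other hand, with $\rho := \max_{j} r_{j} \in (0,1)$ we have $d(B) = r_{v} \leq \rho^{Nd} \leq \rho^{N}$ for every $B \in \Phi$. Hence, assuming as we may that $\varepsilon < s_{0} = \dim K$ (otherwise any nonempty type-$[\Id,\Id]$ self-similar subset of $K$ --- e.g.\ the fixed point of a suitable iterate of $\psi_{1}$ --- already works), for $s := s_{0} - \varepsilon > 0$ we obtain
\begin{displaymath} \sum_{B \in \Phi} d(B)^{s} = \sum_{B \in \Phi} d(B)^{s_{0}}\, d(B)^{-\varepsilon} \geq c_{0}\, \rho^{-N\varepsilon} \longrightarrow \infty \quad \text{as } N \to \infty, \end{displaymath}
since $\rho^{-\varepsilon} > 1$. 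Fixing $N$ with $\sum_{B \in \Phi} d(B)^{s} > 1$ and using that $t \mapsto \sum_{B \in \Phi} d(B)^{t}$ is strictly decreasing, the self-similar set $K_{\varepsilon}$ generated by $\{\psi_{B} : B \in \Phi\}$ --- which is very strongly separated, contained in $K$, and generated by type-$[\Id,\Id]$ similitudes --- satisfies $\dim K_{\varepsilon} > s = \dim K - \varepsilon$. Undoing the initial reduction finishes the proof.

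The main obstacle, as I see it, is recognizing that restricting to a single type is not fatal even though it is genuinely wasteful: the type distribution equilibrates, so the $s_{0}$-mass cannot be concentrated on $[\Id,\Id]$, and one must be content with retaining a fixed fraction $\gtrsim p^{-p}$ of it. What rescues the argument is that passing to a deep generation shrinks every contraction ratio, and then a bounded multiplicative deficit in the defining sum costs only a vanishingly small amount of Hausdorff dimension --- exactly the mechanism underlying Lemma~\ref{mainL1}. A secondary technical point, needed to handle reflections and non-cyclic type groups, is that one should reach $[\Id,\Id]$ by raising a pigeonholed type to its order rather than by appending correcting maps.
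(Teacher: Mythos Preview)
Your proof is correct and takes a genuinely different route from the paper's. Both arguments begin with the same reduction to the very strongly separated case via Lemma~\ref{mainL1} and the observation that the collection of occurring types is finite. From there the paper runs an iterative procedure: at each step, every ``bad'' similitude $\psi$ (one with $[\psi]\neq[\Id,\Id]$) is replaced by the family of compositions $\psi_{i_{1}}\circ\cdots\circ\psi_{i_{k[\psi]}}\circ\psi$, where $k[\psi]$ is a bounded correction length chosen (via Lemma~\ref{typeId}) so that at least one of these new compositions is good; one then checks that the bad $s_{0}$-mass decays geometrically, so that eventually $\sum_{\psi\in\calG} r_{\psi}^{s_{0}-\varepsilon}>1$. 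Your argument instead passes to a single deep generation $N$, pigeonholes one type $T$ carrying $s_{0}$-mass at least $1/p$, and concatenates $\operatorname{ord}(T)$ copies to reach the identity type. This is cleaner: it avoids the recursion, produces generators all of the same length $Nd$, and the bookkeeping collapses to the single product bound $\bigl(\sum_{u\in W} r_{u}^{s_{0}}\bigr)^{d}\geq p^{-p}$, after which the familiar ``deep generation costs little dimension'' mechanism from Lemma~\ref{mainL1} finishes. The paper's approach, by contrast, reaches $[\Id,\Id]$ by \emph{appending correcting words} rather than by \emph{exponentiating a fixed type}; this is slightly more flexible (correction lengths can be short even when $p$ is large, and the output need not lie in one generation) but requires tracking the good/bad split through the iteration. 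One minor quibble: your parenthetical treatment of the case $\varepsilon\geq s_{0}$ is off at the boundary $\varepsilon=s_{0}$, where a single fixed point has dimension $0$, not $>0$; but this is harmless, since for $s_{0}>0$ one may always shrink $\varepsilon$ below $s_{0}$, and the statement is trivial when $s_{0}=0$.
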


\begin{proof}  By Lemma \ref{mainL1}, we may assume that $K$ already satisfies the very strong separation condition. Let $\{\psi_{1},\ldots,\psi_{q}\}$ be the system of similitudes generating $K$. The rationality assumption implies that there is a \textbf{finite} family of types $\mathcal{T}$ such that every composition of the form $\psi_{i_{1}} \circ \ldots \circ \psi_{i_{k}}$  with $k \in \N$  and $(i_{1},\ldots,i_{k}) \in \{1,\ldots,q\}^{k}$ has one of the types in  $\mathcal{T}$. For every type in $[O,R] \in \mathcal{T}$, fix some such composition $\psi_{[O,R]}$ with $[\psi_{[O,R]}] = [O,R]$. If $[O,R] \in \mathcal{T}$, the previous lemma provides a number $n_{[O,R]} \in \N$ with the following property: if $[\psi] = [O,R]$, then the $n_{[O,R]}$-fold composition $\psi_{[O,R]} \circ \ldots \circ \psi_{[O,R]} \circ \psi$ has type $[\Id,\Id]$. We have proven
\begin{itemize}
\item[($\star$)] For every type $[O,R] \in \mathcal{T}$ there corresponds a number $k[O,R] \in \N$ such that if $\psi$ is a similitude with $[\psi] = [O,R]$, then $[\psi_{i_{1}} \circ \ldots \circ \psi_{i_{k[O,R]}} \circ \psi] = [\Id,\Id]$ for some numbers $i_{j} \in \{1,\ldots,q\}$, $1 \leq j \leq k[O,R]$. Since $\card \mathcal{T} < \infty$, we have
\begin{displaymath} C := \max\{k[O,R] : [O,R] \in \mathcal{T}\} < \infty. \end{displaymath} 
\end{itemize}

Now we start buiding $K_{\varepsilon}$. If $\psi$ is a similitude on $\R^{2}$, write $r_{\psi} \in (0,1)$ for the contraction ratio of $\psi$. The similitude $\psi$ is called \emph{good}, if $[\psi] = [\Id,\Id]$, and \emph{bad} otherwise. Our goal is to construct a family $\calG$ of good similitudes of the form $\psi_{i_{1}} \circ \ldots \circ \psi_{i_{k}}$, with $(i_{1},\ldots,i_{k}) \in \{1,\ldots,q\}^{k}$ and $k \in \N$, satisfying
\begin{equation}\label{goodG} \sum_{\psi \in \calG} r_{\psi}^{1 - \varepsilon} > 1.  \end{equation}
The self-similar set $K_{\varepsilon}$ generated by the similitudes in $\calG$ is contained in $K$ and has $\dim K_{\varepsilon} > \dim K - \varepsilon$ by strong separation. We will find $\calG$ through the following iterative procedure. Initially, let $\calG_{1} := \{\psi_{j}  : 1 \leq j \leq q \text{ and } \psi_{j} \text{ is good}\}$, and $\calB_{1} := \{\psi_{j} : 1 \leq j \leq q \text{ and } \psi_{j} \text{ is bad}\}$. Repeat the algorithm below for $j = 1,2,\ldots$.
\begin{itemize}
\item[(ALG)] If $\calB_{j} = \emptyset$, define $\calG_{j + 1} = \emptyset = \calB_{j + 1}$. Otherwise, for every mapping $\psi \in \calB_{j}$, take all compositions of the form $\psi_{i_{1}} \circ \ldots \circ \psi_{i_{k[\psi]}} \circ \psi$, where $k[\psi]$ is the number from ($\star$). Then, define $\calG_{j + 1}$ and $\calB_{j + 1}$ to consist of all such good and bad compositions, respectively.
\end{itemize}
The idea is to stop iterating (ALG) at some finite stage $j = j_{0}$ and conclude that $\calG := \bigcup_{j \leq j_{0}} \calG_{j}$ satisfies \eqref{goodG}. Writing $t = \dim K$, we clearly have the following equation for every $j \in \N$:
\begin{displaymath} \sum_{i = 1}^{j} \sum_{\psi \in \calG_{i}} r_{\psi}^{t} + \sum_{\psi \in \calB_{j}} r_{\psi}^{t} = 1. \end{displaymath} 
Thus, to prove that \eqref{goodG} holds for $\calG = \bigcup_{j \leq j_{0}} \calG_{j}$ for large enough $j_{0} \in \N$, it suffices to demonstrate that
\begin{equation}\label{badB} \sum_{\psi \in \calB_{j}} r_{\psi}^{t} \to 0 \end{equation}
as $j \to \infty$.
To this end, note that
\begin{displaymath}  \sum_{\psi \in \calB_{j}} r_{\psi}^{t} = \sum_{\psi \in \calB_{j - 1}} r_{\psi}^{t} - \sum_{\psi \in \calG_{j}} r_{\psi}^{t}. \end{displaymath}
Then recall that $k[\psi] \leq C$ for every type $[\psi] \in \mathcal{T}$. This means that, on iterating (ALG), every similitude $\psi \in \calB_{j - 1}$ produces at most $q^{C}$ new similitudes in $\calB_{j} \cup \calG_{j}$ out of which at least one is in $\calG_{j}$, by definition of $k[\psi]$. It follows that 
\begin{displaymath} \sum_{\psi \in \calG_{j}} r_{\psi}^{t} \geq c \sum_{\psi \in \calB_{j - 1}} r_{\psi}^{t} \end{displaymath}
for some constant $c \in (0,1)$ depending only on $q$, $C$ and the contraction ratios of the original similitudes $\psi_{i}$. This gives \eqref{badB} and finishes the proof.
\end{proof}

\begin{lemma}\label{directions} Let $B$ be an $\calH^{1}$-measurable subset of the plane with $\calH^{1}(B) > 0$. Then either $D(B)$ contains an interval, or the \emph{direction set}
\begin{displaymath} S(B) := \left\{\frac{x - y}{|x - y|} : x,y \in B, \: x \neq y\right\} \end{displaymath} 
is dense in $S^{1} = \{x \in \R^{2} : |x| = 1\}$.
\end{lemma}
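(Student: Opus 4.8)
The plan is to prove the dichotomy in contrapositive spirit: assuming $S(B)$ is \emph{not} dense, I will exhibit an interval inside $D(B)$. First I would replace $B$ by a compact subset with $0 < \calH^{1}(B) < \infty$ (such a subset exists, and passing to a subset only shrinks both $S(B)$ and $D(B)$). If $S(B)$ misses an open arc $\Theta \subset S^{1}$, then no two points of $B$ span a direction in $\Theta$, so for $e \in \Theta$ the orthogonal projection $\pi$ onto $e^{\perp}$ is injective on $B$ with Lipschitz inverse; hence $B$ lies on a Lipschitz graph over $A := \pi(B)$, with $\mathcal{L}^{1}(A) > 0$ because of the Lipschitz bound, and in particular $B$ is $1$-rectifiable. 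By the structure theory of rectifiable sets, $B$ is covered up to an $\calH^{1}$-null set by countably many $C^{1}$ curves, one of which, $\gamma$, carries a piece of positive $\calH^{1}$-measure; restricting to a subinterval on which $\gamma$ is an embedding and parametrizing by arclength, I obtain a compact $A \subset \R$ with $\mathcal{L}^{1}(A) > 0$ and $\gamma(A) \subset B$, $|\gamma'| \equiv 1$. By the Steinhaus theorem $A - A$ contains an interval $(-\delta,\delta)$; more precisely $u \mapsto \mathcal{L}^{1}(A \cap (A+u))$ is continuous and positive there.

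If $\gamma$ is affine on a subinterval $I'$ meeting $A$ in positive measure, then $\gamma(A \cap I')$ is a positive-length subset of a line and $D(B) \supseteq \{|u| : u \in (A \cap I') - (A \cap I')\}$ already contains an interval, so I may assume $\gamma$ is nowhere straight on the relevant part. Consider the $C^{1}$ function $\tilde F(s,t) := |\gamma(s) - \gamma(t)|^{2}$, with $\partial_{s}\tilde F = 2\langle \gamma(s) - \gamma(t), \gamma'(s)\rangle$ and $\partial_{t}\tilde F = 2\langle \gamma(s) - \gamma(t), \gamma'(t)\rangle$. A short argument lets me pick density points $s_{0} \neq t_{0}$ of $A$ at which both partials are nonzero: if, say, $\partial_{s}\tilde F$ vanished at all such pairs, then fixing $t_{0}$ and varying $s_{0}$ would force $\gamma'(s_{0}) \perp (\gamma(s_{0}) - \gamma(t_{i}))$ for three density points $t_{1},t_{2},t_{3}$ with non-collinear images (which exist unless $\gamma(A)$ lies on a line, already handled), hence those three images collinear — absurd; an analogous argument, after possibly interchanging the roles of $s$ and $t$, disposes of $\partial_{t}\tilde F$.

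With such a base point, on a small square $U = (s_{0}-\rho,s_{0}+\rho)\times(t_{0}-\rho,t_{0}+\rho)$ the level sets $\{\tilde F = r\}$, for $r$ in a small interval $J$ about $r_{0} := \tilde F(s_{0},t_{0})$, are $C^{1}$ graphs $s = \sigma_{r}(t)$ over $(t_{0}-\rho,t_{0}+\rho)$ with $\sigma_{r}'$ bounded above and below uniformly in $r$. It then remains to show every such level set meets $A \times A$, i.e.\ that $A \cap \sigma_{r}^{-1}(A)$ is nonempty near $t_{0}$. Here I would use that, since $s_{0}$ and $t_{0}$ are density points, at scale $\rho$ the complement of $A$ near either of them has measure $o(\rho)$; as $\sigma_{r}^{-1}$ is a diffeomorphism with bounded distortion, $A_{1} := A \cap (s_{0}-\rho,s_{0}+\rho)$ pulls back to a set filling $(t_{0}-\rho,t_{0}+\rho)$ up to measure $o(\rho)$, and $A_{2} := A \cap (t_{0}-\rho,t_{0}+\rho)$ does too, so the two intersect. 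Hence $J \subseteq \tilde F(A \times A)$ and $\sqrt{J} \subseteq D(\gamma(A)) \subseteq D(B)$ is the desired interval.

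The step I expect to be the real obstacle is the last one: producing a genuine interval — rather than merely a set of positive length — of realized distances from a $C^{1}$ curve carrying a positive-measure parameter set. The difficulty is that for a dense set of radii the level curve of the distance function could a priori slip through the "fractal" set $A \times A$; excluding this is what forces the careful choice of $(s_{0},t_{0})$ (so that the level curves are uniformly transverse to one coordinate direction and not too steep in the other) together with the quantitative density fact that near a density point of $A$ the gaps of $A$ are negligible compared with the working scale. The non-degeneracy digressions in the third paragraph, and the verification that the various degenerate configurations collapse to the flat case, are routine but must be carried out with some care.
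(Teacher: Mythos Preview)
Your argument is essentially correct, but it takes a much longer route than the paper.  Both proofs begin identically: if $S(B)$ misses an arc, then $B$ satisfies a cone condition (equivalently, lies on a Lipschitz graph) and is therefore $1$-rectifiable.  At that point the paper simply cites the 1948 theorem of Besicovitch and Miller \cite{BM}, which asserts that every $1$-rectifiable planar set of positive length has the Steinhaus property --- its distance set contains an interval --- and the lemma is finished in one line.  What you do instead is reprove the relevant special case of Besicovitch--Miller by hand: pass to a $C^{1}$ arc $\gamma$ carrying a parameter set $A$ of positive length, study the squared-distance map $\tilde F(s,t)=|\gamma(s)-\gamma(t)|^{2}$ via the implicit function theorem, and use Lebesgue density at a well-chosen pair $(s_{0},t_{0})$ to show that every nearby level curve of $\tilde F$ meets $A\times A$.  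Your approach is self-contained, which is a genuine advantage, but it is substantially longer and more delicate than quoting the classical result.

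One detail in your write-up is not quite watertight.  You need both $\partial_{s}\tilde F$ and $\partial_{t}\tilde F$ to be nonzero at a \emph{common} pair of density points, but the argument you sketch only shows that neither partial vanishes identically on $A'\times A'$; this does not by itself give a common point of non-vanishing.  The repair is routine: once you have $(s_{1},t_{1})\in A'\times A'$ with $\partial_{s}\tilde F(s_{1},t_{1})\neq 0$, restrict to a neighbourhood where this persists; if $\partial_{t}\tilde F$ vanished at every density-pair there, then fixing such a $t$ and letting $s$ range over the positive-measure slice forces $\gamma(s)-\gamma(t)\perp\gamma'(t)$ for a positive-measure set of $s$, placing a positive-length piece of $\gamma(A)$ on a single line --- the case you have already handled.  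This is exactly the kind of ``care'' you flag in your last paragraph, but it should be stated explicitly.
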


\begin{proof} If $S(B)$ is not dense in $S^{1}$, there exists a line $L$ through the origin and an opening angle $\alpha > 0$ such that the cones
\begin{displaymath} \calC(x,L,\alpha) := \{ y \in \R^{2} : d(y - x,L) \leq \alpha|x - y|\} \end{displaymath} 
never intersect $B \setminus \{x\}$ for any $x \in B$. It now follows from \cite[Lemma 15.13]{Ma2} that $K$ is $1$-rectifiable.  According to a 1948 result of Besicovitch and Miller \cite{BM}, all $1$-rectifiable sets with positive length in the plane possess the 'Steinhaus property': their distance sets contain an interval.
\end{proof}

\begin{remark}\label{directionsRemark} Applying the previous lemma is the only place in the paper where the assumption $\calH^{1}(K) > 0$ is needed: for everything else, we would be happy with $\dim K \geq 1$. Unfortunately, we were not able to determine whether $\dim K \geq 1$ implies that either the direction set $S(K)$ is dense in $S^{1}$, or $\dim D(K) = 1$.
\end{remark}

\begin{definition}\label{K0} The set $K_{0}$ consists of all points of the form $\psi_{i_{1}} \circ \ldots \circ \psi_{i_{k}}(0)$, with $k \in \N$ and $(i_{1},\ldots,i_{k}) \in \{1,\ldots,q\}^{k}$, such that $[\psi_{i_{1}} \circ \ldots \circ \psi_{i_{k}}] = [\Id,\Id]$. The inclusion $K_{0} \subset K$ is not true in general, but it may be assumed in our situation. Indeed, the distance set of $K$ is invariant under translations of $K$, whence we may always take one of the translation vectors to equal zero -- and then $K_{0} \subset K$.
\end{definition}

\begin{proposition} The set $K_{0}$ is dense in $K$.
\end{proposition}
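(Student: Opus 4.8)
The plan is to show that every point of $K$ is a limit of points of $K_{0}$, by starting from a small generation $n$ ball containing the point and then using Lemma \ref{typeId} to repair the type of the similitude attached to that ball.

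Fix $x \in K$ and $\varepsilon > 0$. Each $B \in \calB_{n}$ satisfies $d(B) \leq (\max_{j} r_{j})^{n}$, since $d(B)$ is the contraction ratio of the length-$n$ composition $\psi_{B}$; this tends to $0$, and the balls of $\calB_{n}$ cover $K$, so for $n$ large enough we may pick $B \in \calB_{n}$ with $x \in B$ and $d(B) < \varepsilon$. Let $\psi_{B} = \psi_{i_{1}} \circ \ldots \circ \psi_{i_{n}}$ be an associated similitude, so that $\psi_{B}(B_{0}) = B$. Because every generating similitude has a rational type and the rotation angle of a composition is an integer combination of the rotation angles of its factors, $[\psi_{B}]$ is again a rational type, and Lemma \ref{typeId} provides $N \in \N$ with $[\psi_{B}^{N}] = [\Id,\Id]$, where $\psi_{B}^{N}$ denotes the $N$-fold composition $\psi_{B} \circ \ldots \circ \psi_{B}$.

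Now $\psi_{B}^{N}$ is a composition of $nN$ of the original similitudes $\psi_{1},\ldots,\psi_{q}$ and has trivial type, so $\psi_{B}^{N}(0) \in K_{0}$ by Definition \ref{K0}. On the other hand, $\psi_{j}(B_{0}) \subset B_{0}$ for every $j$ forces $\psi_{B}(B_{0}) = B \subset B_{0}$, whence $\psi_{B}^{N}(B_{0}) \subset \psi_{B}^{N-1}(B_{0}) \subset \ldots \subset \psi_{B}(B_{0}) = B$; in particular $\psi_{B}^{N}(0) \in B$, so $|x - \psi_{B}^{N}(0)| \leq d(B) < \varepsilon$. Since $x \in K$ and $\varepsilon > 0$ were arbitrary and $K_{0} \subset K$ under the normalization of Definition \ref{K0}, this proves that $K_{0}$ is dense in $K$.

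The argument is elementary and I anticipate no real obstacle in it; the one point deserving emphasis is that it uses the standing hypothesis of this section in an essential way — it is precisely the rationality of all the rotations that makes $[\psi_{B}]$ a rational type and hence Lemma \ref{typeId} applicable. In the presence of an irrational rotation no power of $\psi_{B}$ need return to type $[\Id,\Id]$, so this simple approximation scheme would break down.
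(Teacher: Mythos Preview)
Your argument is correct and follows essentially the same route as the paper: pick a short composition $\psi_{B}$ that lands near $x$, invoke Lemma~\ref{typeId} on its (rational) type to pass to a power $\psi_{B}^{N}$ with type $[\Id,\Id]$, and observe that $\psi_{B}^{N}(0)$ stays close to $x$. The only cosmetic difference is that you use the ball nesting $\psi_{B}^{N}(B_{0}) \subset \psi_{B}(B_{0}) = B$ to conclude closeness, while the paper computes $|\psi_{\varepsilon}^{n}(0)-\psi_{\varepsilon}(0)| = r_{\varepsilon}|\psi_{\varepsilon}^{n-1}(0)|$ directly.
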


\begin{proof} Let $\varepsilon > 0$. All points of the form $\psi_{i_{1}} \circ \ldots \circ \psi_{i_{k}}(0)$ with $k \in \N$ and $(i_{1},\ldots,i_{k}) \in \{1,\ldots,q\}^{k}$ are definitely dense in $K$. Thus, given $x \in K$, we may choose an arbitrarily large number $k \in \N$ and a sequence $(i_{1},\ldots,i_{k}) \in \{1,\ldots,q\}^{k}$ such that $\psi_{\varepsilon}(0) \in B(x,\varepsilon)$ with $\psi_{\varepsilon} := \psi_{i_{1}} \circ \ldots \circ \psi_{i_{k}}$. Now choose $k$ so large that the contraction ratio $r_{\varepsilon}$ of $\psi_{\varepsilon}$ is no more than $\varepsilon$. As $[\psi_{\varepsilon}]$ is again rational, the $n$-fold composition of $\psi_{\varepsilon}$ with itself, denoted $\psi_{\varepsilon}^{n}$, has type $[\Id,\Id]$ for some $n \in \N$, by Lemma \ref{typeId}. Then
\begin{displaymath} |\psi_{\varepsilon}^{n}(0) - \psi_{\varepsilon}(0)| = |\psi_{\varepsilon}(\psi_{\varepsilon}^{n - 1}(0)) - \psi_{\varepsilon}(0)| = r_{\varepsilon}|\psi_{\varepsilon}^{n - 1}(0)| \leq \varepsilon. \end{displaymath}
This proves that $\psi_{\varepsilon}^{n}(0) \in B(x,2\varepsilon) \cap K_{0}$.
\end{proof}

The denseness of $K_{0}$ in $K$ and our assumption $\calH^{1}(K) > 0$ immediately yield

\begin{cor}[to Lemma \ref{directions}]  Either $D(K)$ contains an interval, or the direction set $S(K_{0})$ is dense in $S^{1}$.
\end{cor}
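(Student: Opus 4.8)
The plan is to apply Lemma \ref{directions} not to $K_{0}$ itself --- which would be pointless, since $K_{0}$ is a countable set and hence has $\calH^{1}(K_{0}) = 0$ --- but to the full self-similar set $K$. Being compact (by Hutchinson's theorem), $K$ is certainly $\calH^{1}$-measurable, and $\calH^{1}(K) > 0$ by our standing assumption, so Lemma \ref{directions} applies and produces the dichotomy: either $D(K)$ contains an interval, in which case there is nothing left to prove, or else the direction set $S(K)$ is dense in $S^{1}$. It then remains only to upgrade density of $S(K)$ to density of $S(K_{0})$.

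For this second step I would invoke the Proposition that $K_{0}$ is dense in $K$. Fix an arbitrary $\sigma \in S(K)$ and write $\sigma = (x - y)/|x - y|$ with $x,y \in K$ and $x \neq y$. Using density of $K_{0}$ in $K$, choose sequences $(x_{n})_{n}, (y_{n})_{n} \subset K_{0}$ with $x_{n} \to x$ and $y_{n} \to y$. Then $|x_{n} - y_{n}| \to |x - y| > 0$, so $x_{n} \neq y_{n}$ for all large $n$, and continuity of the map $(u,v) \mapsto (u - v)/|u - v|$ away from the diagonal gives $(x_{n} - y_{n})/|x_{n} - y_{n}| \to \sigma$. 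Hence $\sigma \in \overline{S(K_{0})}$, so that $S(K) \subset \overline{S(K_{0})}$. Combining this with the density of $S(K)$ in $S^{1}$ yields $S^{1} = \overline{S(K)} \subset \overline{S(K_{0})}$, i.e.\ $S(K_{0})$ is dense in $S^{1}$.

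There is essentially no genuine obstacle in the argument; the only point that deserves a moment's attention is that Lemma \ref{directions} must be fed the measure-theoretically substantial set $K$ rather than the arithmetically convenient but $\calH^{1}$-null set $K_{0}$, and that density of the direction set then descends along the dense inclusion $K_{0} \subset K$ by the elementary continuity observation above.
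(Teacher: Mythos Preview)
Your argument is correct and is precisely the approach the paper has in mind: apply Lemma \ref{directions} to $K$ (using $\calH^{1}(K) > 0$), and then pass from density of $S(K)$ to density of $S(K_{0})$ via the Proposition that $K_{0}$ is dense in $K$. The paper simply states that the corollary follows ``immediately'' from these two ingredients without writing out the continuity step you supply.
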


If $D(K)$ contains an interval, the proof is finished. So, from now on, we will assume that $S(K_{0})$ is dense in $S^{1}$. For the remainder of the proof, fix $\varepsilon > 0$ and use Lemma \ref{mainL2} to locate a self-similar set $K_{\varepsilon} \subset K$ with $\dim K_{\varepsilon} > \dim K - \varepsilon \geq 1 - \varepsilon$, satisfying the very strong separation condition and containing neither rotations nor reflections. The symbols $\calB_{n}$ and $\calB_{n}^{\varepsilon}$ will be used to denote the collections of generation $n$ balls of $K$ and $K_{\varepsilon}$, respectively. Let $\pi_{e} \colon \R^{2} \to \R$ denote the orthogonal projection $\pi_{e}(x) := x \cdot e$ onto the line spanned by the vector $e \in S^{1}$.

\begin{lemma}\label{mainL3} The set $S_{\varepsilon} := \{e \in S^{1} : \dim \pi_{e}(K_{\varepsilon}) > 1 - \varepsilon\}$ is open and dense in $S^{1}$.
\end{lemma}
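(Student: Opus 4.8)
The plan is to verify openness and density of $S_{\varepsilon}$ separately. Density will fall out immediately from Marstrand's projection theorem: since $\dim K_{\varepsilon} > 1 - \varepsilon$ we have $\min\{1,\dim K_{\varepsilon}\} > 1 - \varepsilon$, and Marstrand's theorem (see \cite{Ma2}) gives $\dim \pi_{e}(K_{\varepsilon}) = \min\{1,\dim K_{\varepsilon}\} > 1 - \varepsilon$ for $\calH^{1}$-almost every $e \in S^{1}$. Hence $S^{1} \setminus S_{\varepsilon}$ is contained in a set of zero length, so it has empty interior, and therefore $S_{\varepsilon}$ is dense. Everything else is devoted to openness, and the key structural point --- for which the no-rotations reduction of Lemma \ref{mainL2} is essential --- is that $\pi_{e}(K_{\varepsilon})$ is itself a self-similar subset of $\R$ whose contraction ratios do not depend on $e$.

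To make this precise, note that because $K_{\varepsilon}$ is generated by similitudes containing no rotations or reflections, every generation-$n$ map is a homothety $\psi_{B}(x) = d(B)x + w_{B}$ for $B \in \calB_{n}^{\varepsilon}$, so that $\pi_{e} \circ \psi_{B} = \phi_{B}^{e} \circ \pi_{e}$, where $\phi_{B}^{e}(t) := d(B)t + \pi_{e}(w_{B})$. Consequently $\pi_{e}(K_{\varepsilon}) = \bigcup_{B \in \calB_{n}^{\varepsilon}} \phi_{B}^{e}(\pi_{e}(K_{\varepsilon}))$ for every $n$, and $\phi_{B}^{e}(\pi_{e}(B_{0})) = \pi_{e}(B)$ is an interval of length $d(B)$. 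Now fix $e \in S_{\varepsilon}$ and pick $\tau$ with $1 - \varepsilon < \tau < \dim \pi_{e}(K_{\varepsilon})$. Exactly as in the proof of Lemma \ref{mainL1}, I would apply the $5r$-covering lemma \cite[Lemma 7.3]{Ru} to the finite family $\{\pi_{e}(B) : B \in \calB_{n}^{\varepsilon}\}$ to obtain a subfamily indexed by $\calD_{n} \subset \calB_{n}^{\varepsilon}$ with the intervals $\{\pi_{e}(B) : B \in \calD_{n}\}$ pairwise disjoint and $\pi_{e}(K_{\varepsilon}) \subset \bigcup_{B \in \calD_{n}} 5\pi_{e}(B)$. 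Since $\tau < \dim \pi_{e}(K_{\varepsilon})$ forces $\calH^{\tau}(\pi_{e}(K_{\varepsilon})) = \infty$, while the mesh of the cover $\{5\pi_{e}(B) : B \in \calD_{n}\}$ tends to $0$ as $n \to \infty$, we get $\sum_{B \in \calD_{n}} d(B)^{\tau} \to \infty$; fix $n$ so large that $\calD := \calD_{n}$ satisfies $\sum_{B \in \calD} d(B)^{\tau} > 1$.

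It then remains to perturb $e$. The intervals $\{\pi_{e}(B) : B \in \calD\}$ are finitely many, compact, and pairwise disjoint, and their endpoints depend continuously on the direction, so there is $\delta > 0$ such that $\{\pi_{e'}(B) : B \in \calD\}$ stays pairwise disjoint whenever $e' \in S^{1}$ and $|e - e'| < \delta$. For such $e'$, the homotheties $\{\phi_{B}^{e'} : B \in \calD\}$ generate a self-similar set $\tilde{K}_{e'}$, which satisfies the very strong separation condition because its first-generation intervals $\phi_{B}^{e'}(\pi_{e'}(B_{0})) = \pi_{e'}(B)$ are pairwise disjoint; moreover $\tilde{K}_{e'} \subset \pi_{e'}(K_{\varepsilon})$, since $\phi_{B}^{e'}(\pi_{e'}(K_{\varepsilon})) = \pi_{e'}(\psi_{B}(K_{\varepsilon})) \subset \pi_{e'}(K_{\varepsilon})$ for every $B \in \calD$. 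By the dimension formula for separated self-similar sets \cite{Hu}, $\dim \tilde{K}_{e'} = \sigma$, where $\sum_{B \in \calD} d(B)^{\sigma} = 1$; as $s \mapsto \sum_{B \in \calD} d(B)^{s}$ is strictly decreasing and $\sum_{B \in \calD} d(B)^{\tau} > 1$, we conclude $\sigma > \tau > 1 - \varepsilon$. Hence $\dim \pi_{e'}(K_{\varepsilon}) \geq \dim \tilde{K}_{e'} = \sigma > 1 - \varepsilon$, so $e' \in S_{\varepsilon}$, proving that $S_{\varepsilon}$ is open.

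I expect the main obstacle to be the extraction step in the second paragraph: pulling out, at a single finite generation $n$, a disjoint subfamily of projected balls whose diameters still carry more than unit $\tau$-mass. This succeeds only because the ratios $d(B)$ are independent of the direction --- the one place where Lemma \ref{mainL2} is genuinely used --- since for a system containing a rotation the conjugation $\pi_{e} \circ \psi_{B} = \phi_{B}^{e} \circ \pi_{e}$ breaks down (a rotation reshuffles the direction $e$), and then the finite configuration witnessing $\dim \pi_{e}(K_{\varepsilon}) > 1 - \varepsilon$ would not persist for nearby directions.
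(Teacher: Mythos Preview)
Your proof is correct and follows essentially the same route as the paper's: density via Marstrand, and openness by exploiting that the absence of rotations makes each $\pi_{e}(K_{\varepsilon})$ self-similar in $\R$, extracting at a single generation a finite disjoint subfamily of projected balls whose $\tau$-sum exceeds $1$, and then perturbing $e$ by continuity. The only cosmetic differences are that the paper invokes Lemma \ref{mainL1} directly rather than rerunning the $5r$-covering argument, and works at the exponent $1-\varepsilon$ instead of an intermediate $\tau$.
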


\begin{proof} A stronger, more general result is \cite[Proposition 9.3]{HS}. The proof here is presented, not only for its simplicity, but also because it contains information, which will be used independently in the sequel. Marstrand's projection theorem guarantees the denseness of $S_{\varepsilon}$, so we may concentrate on proving openness. Since $K_{\varepsilon}$ contains no rotations or reflections, the projections $\pi_{e}(K_{\varepsilon})$ are self-similar sets in $\R$. Hence, if $\dim \pi_{e}(K_{\varepsilon}) > 1 - \varepsilon$ for some $e \in S^{1}$, Lemma \ref{mainL1} indicates that we may find a self-similar subset $K^{e}$ of $\pi_{e}(K_{\varepsilon})$ satisfying the very strong separation condition and with $\dim K^{e} > 1 - \varepsilon$. The first generation balls $\calB_{1}^{e}$ of $K^{e}$ are then disjoint closed intervals of $\R$, and
\begin{displaymath} \sum_{I \in \calB^{e}_{1}} \ell(I)^{1 - \varepsilon} > 1. \end{displaymath}
Moreover, these intervals are $\pi_{e}$-projections of some \emph{good} balls $\calG_{1}^{e} \subset \bigcup_{m \in \N} \calB_{m}^{\varepsilon}$. Since the intervals in $\calB_{1}^{e}$ are disjoint, the balls $\calG_{1}^{e}$ are contained in well-separated tubes orthogonal to the vector $e$. Now the separation of the tubes can be used to infer that there exists $\delta > 0$ with the following property: if $|\xi - e| < \delta$, the projections $\pi_{\xi}(B)$ and $\pi_{\xi}(B')$ are disjoint intervals for distinct balls $B,B' \in \calG_{1}^{e}$. These intervals satisfy
\begin{displaymath} \sum_{B \in \calG_{1}^{e}} \ell(\pi_{\xi}(B))^{1 - \varepsilon} = \sum_{I \in \calB_{1}^{e}} \ell(I)^{1 - \varepsilon} > 1. \end{displaymath} 
This finishes the proof: $\pi_{\xi}(K_{\varepsilon})$ always contains the self-similar set generated by the similitudes taking $\pi_{\xi}(B_{0})$ to $\pi_{\xi}(B)$, for $B \in \calG_{1}^{e}$. By the previous equation, this self-similar subset of $\pi_{\xi}(K_{\varepsilon})$ has dimension strictly greater than $1 - \varepsilon$ as long as it satisfies the (very) strong separation condition, which is true as long as $|\xi - e| < \delta$.
\end{proof} 

The set $S(K_{0})$ is dense, and the set $S_{\varepsilon}$ is open and non-empty. Hence, we may choose and fix a vector 
\begin{displaymath} e \in S(K_{0}) \cap S_{\varepsilon}. \end{displaymath}
By definition of $e \in S(K_{0})$, we may then locate distinct points $x_{o},y_{o} \in K_{0}$ such that $(x_{o} - y_{o})/|x_{o} - y_{o}| = e$. As $e \in S_{\varepsilon}$, we may also find the collection $\calG_{1} := \calG_{1}^{e} \subset \bigcup_{m \in \N} \calB_{m}^{\varepsilon}$ of \emph{good} balls in the same fashion as in the proof of the previous lemma. The crucial features of these balls are the following:
\begin{equation}\label{goodB1} \sum_{B \in \calG_{1}} d(B)^{1 - \varepsilon} > 1, \end{equation}
and
\begin{equation}\label{goodB2} d(\pi_{e}(B),\pi_{e}(B')) \geq c > 0 \end{equation}
for distinct balls $B,B' \in \calG_{1}$. Based on these facts, we will be able to prove that $\dim D_{x_{o}}(K) > 1 - \varepsilon$, where $D_{x}(R) := \{|x - y| : y \in R\}$ for $x \in \R^{2}$ and $R \subset \R^{2}$. Since $x_{o} \in K_{0} \subset K$, this will give $\dim D(K) > 1 - \varepsilon$ and prove Theorem \ref{main}.

Now, let us see what we can do with the information \eqref{goodB2}. Let $\xi \in S^{1}$ be a vector orthogonal to $e$. Since the balls in $\calG_{1}$ are contained in disjoint (closed) tubes in direction $\xi$, we may choose $\alpha > 0$ such that the following holds: if $B,B' \in \calG_{1}$ are distinct balls, then
\begin{equation}\label{form2} B \cap \calC(x,L_{\xi},\alpha) = \emptyset \quad \text{for all} \quad x \in B', \end{equation} 
where $L_{\xi}$ is the line passing through the origin in direction $\xi$, and, as before,
\begin{displaymath} \calC(x,L_{\xi},\alpha) = \{y \in \R^{2} : d(y - x,L_{\xi}) \leq \alpha|x - y|\}. \end{displaymath}
Let us define one final auxiliary self-similar set: this is the set $G$ generated by the similitudes taking $B_{0}$ to the various balls $B \in \calG_{1}$ without rotations or reflections, see Figure 1. Then $G \subset K_{\varepsilon} \subset K$. Let $\calG_{n}$ be the generation $n$ balls associated with $G$; this definition clearly coincides with the earlier one when $n = 1$.
\begin{figure}[h]
\center
\includegraphics[scale = 0.5]{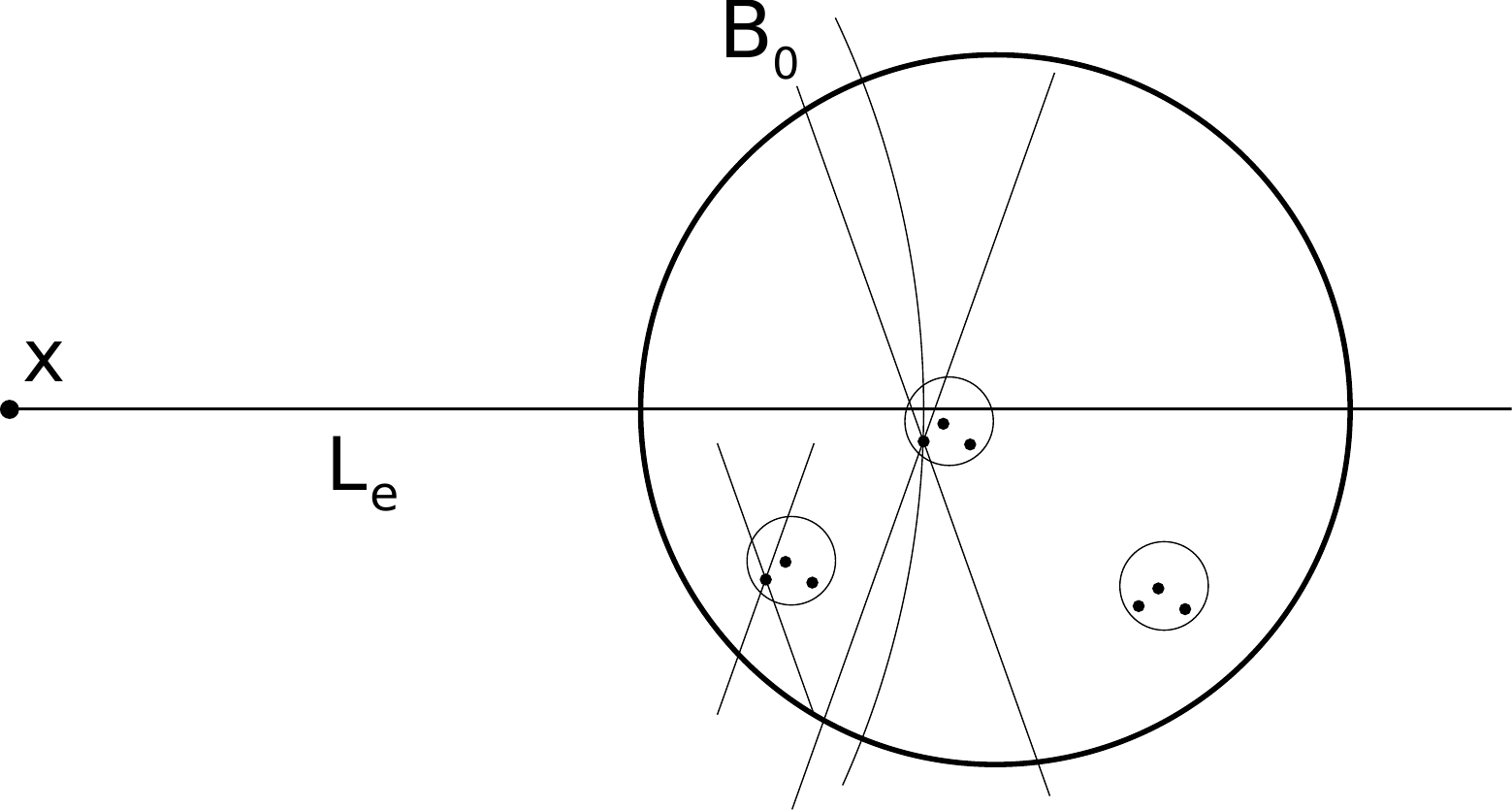}
\caption{The balls in $\calG_{1}$ and $\calG_{2}$ plus some cones of the form $\calC(y,L_{\xi},\alpha)$.}
\end{figure}
\begin{proposition}\label{mainP1} The equation \eqref{form2} is valid for all distinct balls $B,B' \in \calG_{n}$, for any $n \in \N$.
\end{proposition}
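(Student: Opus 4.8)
The plan is to induct on $n$, with the case $n = 1$ being precisely the defining property \eqref{form2} of $\alpha$. For the inductive step, suppose the claim holds for generation $n$, and let $B, B' \in \calG_{n+1}$ be distinct. Each of these balls lies inside a unique generation $1$ ball: write $B \subset \tilde{B}$ and $B' \subset \tilde{B}'$ with $\tilde{B}, \tilde{B}' \in \calG_{1}$. There are two cases. If $\tilde{B} \neq \tilde{B}'$, then for $x \in B' \subset \tilde{B}'$ we have $\calC(x, L_\xi, \alpha) \cap \tilde{B} = \emptyset$ by the $n = 1$ case, and since $B \subset \tilde{B}$ the conclusion \eqref{form2} follows immediately. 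If $\tilde{B} = \tilde{B}' =: \tilde{B}$, write $\tilde{B} = \psi(B_0)$ where $\psi$ is one of the generating similitudes of $G$; by construction $\psi = rT$ for some $r \in (0,1)$ and translation $T$, i.e.\ $\psi$ contains no rotation or reflection. Then $B = \psi(B_1)$ and $B' = \psi(B_1')$ for distinct generation $n$ balls $B_1, B_1' \in \calG_{n}$, and the key point is that $\psi^{-1}$ maps the cone $\calC(x, L_\xi, \alpha)$ to the cone $\calC(\psi^{-1}(x), L_\xi, \alpha)$: the opening angle $\alpha$ and the axis direction $\xi$ are preserved because $\psi$ is a homothety without rotation. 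Hence $B \cap \calC(x, L_\xi, \alpha) = \emptyset$ for all $x \in B'$ is equivalent, after applying $\psi^{-1}$, to $B_1 \cap \calC(x_1, L_\xi, \alpha) = \emptyset$ for all $x_1 \in B_1'$, which holds by the inductive hypothesis.

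The main point to get right is the cone-invariance under the similitudes generating $G$: since these contain neither rotations nor reflections (this is exactly why $K_\varepsilon$, and hence $G$, was built via Lemma \ref{mainL2}), each such $\psi$ has the form $\psi(z) = rz + w$, so $d(\psi(y) - \psi(x), L_\xi) = r \, d(y - x, L_\xi)$ and $|\psi(x) - \psi(y)| = r|x - y|$, giving $\psi(\calC(x, L_\xi, \alpha)) = \calC(\psi(x), L_\xi, \alpha)$ exactly. Compositions of such $\psi$'s are again of this form, so the same holds for $\psi_B$ attached to any $B \in \calG_m$. This is the only place the "no rotations or reflections" property is used in this proposition, and it is what makes the single fixed angle $\alpha$ work for every generation simultaneously — if rotations were allowed, the cone axis would tilt from generation to generation and the argument would collapse.

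I expect no serious obstacle here; the proposition is essentially a bookkeeping consequence of self-similarity plus the rotation-free structure, and the induction is clean once the two cases (same parent ball versus different parent balls) are separated. The one thing to state carefully is that every generation $n+1$ ball of $G$ has a well-defined generation $1$ "ancestor" and that the map taking $B_0$ to that ancestor is one of the chosen homotheties — this is immediate from Definition \ref{genN} applied to the system $\{\psi_B : B \in \calG_1\}$ generating $G$.
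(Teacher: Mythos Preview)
Your proof is correct and follows essentially the same inductive scheme as the paper: base case $n=1$ is \eqref{form2}, and the inductive step splits into two cases according to whether the two generation-$(n+1)$ balls share a common ancestor, using the cone-invariance of the rotation-free similitudes to transfer the hypothesis. The only cosmetic difference is that you branch on the generation-$1$ ancestor (peeling off the outermost similitude and invoking the inductive hypothesis at level $n$), whereas the paper branches on the generation-$n$ ancestor (peeling off the innermost similitude and invoking the base case); both are equally valid.
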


\begin{proof} Equation \eqref{form2} settles the case $n = 1$. Suppose that the proposition holds for some $n \in \N$, and choose two distinct balls in $B_{n + 1},B_{n + 1}' \in \calG_{n + 1}$. If $B_{n + 1}$ and $B_{n + 1}'$ are contained in distinct balls of $\calG_{n}$, the claim follows instantly from the induction hypothesis. The other possibility is that $B_{n + 1},B_{n + 1}' \subset B_{n} \in \calG_{n}$. Then, if $\psi$ is the similitude taking $B_{0}$ to $B_{n}$ without rotations or reflections, we have $\psi^{-1}(B_{n + 1}) =: B \in \calG_{1}$ and $\psi^{-1}(B_{n + 1}') =: B' \in \calG_{1}$. Now, for $x \in B_{n + 1}'$ we have $\psi^{-1}(x) \in B'$, whence $B \cap \calC(\psi^{-1}(x),L_{\xi},\alpha) = \emptyset$ by \eqref{form2}. Finally, the identity $\calC(x,L_{\xi},\alpha) = \psi[\calC(\psi^{-1}(x),L_{\xi},\alpha)]$ yields
\begin{displaymath} B_{n + 1} \cap \calC(x,L_{\xi},\alpha) = \psi[B \cap \calC(\psi^{-1}(x),L_{\xi},\alpha)] = \emptyset. \end{displaymath} \end{proof}

Now we are well equipped to study distances. Let $L_{e}$ be the line passing through the origin in the direction of the vector $e$. Then there is a constant $r_{\alpha} > 0$ such that if $x \in L_{e}$ and $|x| \geq r_{\alpha}$, we have
\begin{equation}\label{form3} S(x,|x - y|) \cap B_{0} \subset \calC(y,L_{\xi},\alpha) \quad \text{for all} \quad y \in B_{0}. \end{equation}
This is clear: if $x \in L_{e}$ and $|x|$ is large enough, and if $S = S(x,r)$ is a circle passing through $B_{0}$, then $B_{0} \cap S$ is very close to $B_{0} \cap (L_{\xi} + y)$ for any $y \in B_{0} \cap S$. As a final intermediate step in proving that $\dim D_{x_{o}}(K) > 1 - \varepsilon$, we have
\begin{lemma}\label{lemma10} Suppose that $x \in L_{e}$ with $|x| \geq r_{\alpha}$. Then $\dim D_{x}(G) \geq 1 - \varepsilon$. 
\end{lemma}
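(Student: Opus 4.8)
The plan is to exploit the key geometric fact recorded in \eqref{form3}, which says that for $x \in L_{e}$ with $|x| \geq r_{\alpha}$, every circle centered at $x$ that meets $B_{0}$ intersects it inside a cone $\calC(y,L_{\xi},\alpha)$, combined with Proposition \ref{mainP1}, which guarantees that distinct generation-$n$ balls $B,B' \in \calG_{n}$ of $G$ lie in cones of this type that miss one another. The upshot should be: for fixed $n$, the circle $S(x,|x - z|)$ through a point $z$ in one ball $B_{n} \in \calG_{n}$ cannot also pass through any other ball $B_{n}' \in \calG_{n}$, because $S(x,|x-z|) \cap B_{0} \subset \calC(z,L_{\xi},\alpha)$ while $B_{n}' \cap \calC(z,L_{\xi},\alpha) = \emptyset$. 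Hence the map $z \mapsto |x - z|$ separates the $q_{n} := \card \calG_{n}$ balls of $\calG_{n}$ into $q_{n}$ disjoint ``distance intervals'' in $D_{x}(G)$, namely $J_{B} := \{|x - z| : z \in B\}$ for $B \in \calG_{n}$ are pairwise disjoint.

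Next I would estimate the length of each interval $J_{B}$. Since $x$ is at bounded distance from $B_{0}$ (and $|x| \geq r_{\alpha}$, so also bounded below), and $B = \psi_{B}(B_{0})$ has diameter $d(B)$, a direct computation — the distance from $x$ to a ball of diameter $d(B)$ lying in $B_{0}$ varies by a quantity comparable to $d(B)$, with constants depending only on $e$, $\alpha$ and $r_{\alpha}$ — shows $\ell(J_{B}) \gtrsim d(B)$. More is true and more useful: applying the same argument one scale down, inside a fixed $B_{n} \in \calG_{n}$ the sub-balls $\calG_{n+1}$ contained in $B_{n}$ have distance-images that are disjoint subintervals of $J_{B_{n}}$, each of length $\gtrsim d(B)$ relative to the ambient scale $\ell(J_{B_n}) \approx d(B_n)$; this gives a self-similar (Cantor-type) lower bound. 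The cleanest way to package this is: $D_{x}(G)$ contains, for every $n$, a union of $q_{n}$ disjoint intervals whose $(1-\varepsilon)$-th powers of lengths sum to at least a constant times $\sum_{B \in \calG_{n}} d(B)^{1-\varepsilon}$, and by \eqref{goodB1} and the sub-multiplicativity of contraction ratios this sum is $> 1$ for every $n$ (since $\calG_n$ is the $n$-th generation of a self-similar set whose first-generation ratios already satisfy $\sum d(B)^{1-\varepsilon} > 1$, the same inequality persists, in fact grows, with $n$). A standard mass-distribution / Frostman argument on the resulting Cantor subset of $D_{x}(G)$ then yields $\dim D_{x}(G) \geq 1 - \varepsilon$.

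Concretely, the steps in order: (1) Fix $x \in L_{e}$, $|x| \geq r_{\alpha}$, and for each $n$ and each $B \in \calG_{n}$ set $J_{B} = \{|x-z| : z \in B \cap G\}$ (a compact subset of $\R$, in fact of an interval of length $\lesssim d(B)$). (2) Use \eqref{form3} together with Proposition \ref{mainP1} to show that for distinct $B,B' \in \calG_{n}$ the sets $J_{B}$ and $J_{B'}$ are disjoint, and moreover $J_{B'} \subset J_{B''}$ whenever $B' \subset B''$; thus the $J_{B}$, $B \in \calG_{n}$, form a nested family of ``intervals'' indexed by the tree of $\calG$. (3) Show $\ell(J_{B}) \gtrsim d(B)$ by the elementary distance estimate, and combine with $\card\{B' \in \calG_{n+1} : B' \subset B\}$ ratio data so that the $J_{B}$'s realize a Cantor construction on $D_x(G) \supset \bigcap_n \bigcup_{B \in \calG_n} J_B$. (4) Build a measure $\mu$ on this Cantor set by distributing, at each $B \in \calG_n$, mass proportional to $d(B)^{1-\varepsilon}$ among the children (legitimate by \eqref{goodB1}), and verify the Frostman condition $\mu(J) \lesssim \ell(J)^{1-\varepsilon}$ for all intervals $J$ using step (3); conclude $\dim D_x(G) \geq 1-\varepsilon$.

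The main obstacle I anticipate is step (2)–(3) glued together quantitatively: one must be careful that the disjointness of the $J_B$'s (which comes for free from the cone separation) is matched by a \emph{uniform} lower bound on the gaps and lengths, so that the Frostman estimate in step (4) does not lose the exponent. The geometry of \eqref{form3} is only an ``approximate'' statement (circles are close to, not equal to, translates of $L_\xi$), so a little care is needed to choose $r_\alpha$ large enough, depending on $\alpha$, that the approximation is good at \emph{all} scales simultaneously — this is exactly why the cones in \eqref{form2} were set up with a fixed opening $\alpha$ and why Proposition \ref{mainP1} was proved for every $n$. Once that uniformity is in hand, the rest is the standard dimension-of-self-similar-Cantor-set computation and should be routine.
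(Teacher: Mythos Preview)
Your plan is essentially the paper's, and the separation step (your (2)) is exactly how the paper argues disjointness: if $t \in d_{x}(B) \cap d_{x}(B')$ for distinct $B,B' \in \calG_{n}$, pick $y \in B$, $y' \in B'$ with $|x-y| = t = |x-y'|$; then $y \in S(x,|x-y'|) \cap B_{0} \subset \calC(y',L_{\xi},\alpha)$ by \eqref{form3}, contradicting Proposition \ref{mainP1}.

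Where you diverge is in step (3), and this is where you are making life harder than necessary. You define $J_{B} = \{|x-z| : z \in B \cap G\}$ and then worry (rightly) about a \emph{uniform} lower bound $\ell(J_{B}) \gtrsim d(B)$ and about gap control. The paper instead takes $I_{B} := d_{x}(B) = \{|x-z| : z \in B\}$ over the \emph{full} closed ball $B$. Then the elementary observation that for any closed ball $B \subset \R^{2}$ and any point $x \notin B$ the set $d_{x}(B)$ is a closed interval of length \emph{exactly} $d(B)$ (nearest and farthest points of $B$ from $x$ lie on the line through $x$ and the center of $B$) gives $\ell(I_{B}) = d(B)$ with no constants and no dependence on $x$, $e$, $\alpha$, or $r_{\alpha}$. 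With this exact equality, the nested system $\{I_{B} : B \in \calG_{n}, \, n \in \N\}$ satisfies the hypotheses of the Martin--Mattila generalized-Cantor-set proposition (Proposition \ref{mm}) verbatim: condition (iv) there becomes $\sum_{B' \subset B} d(B')^{s} = d(B)^{s}$, which holds for $s = \dim G > 1 - \varepsilon$ by the very definition of $G$. One then reads off $0 < \calH^{s}(D_{x}(G)) < \infty$ directly.

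So your Frostman/mass-distribution step (4) is fine in principle, but the anticipated obstacle you flag --- matching disjointness with uniform quantitative length and gap bounds at all scales --- evaporates once you use full balls and the exact identity $\ell(d_{x}(B)) = d(B)$. The ``uniformity across scales'' you were worried about is automatic, not something to be fought for.
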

We will demonstrate that, under the assumptions of Lemma \ref{lemma10}, the set $D_{x}(G)$ is simply a 'generalized Cantor set', whose dimension is relatively easy to evaluate. To make this precise, we need the following proposition from \cite{MM}:

\begin{proposition}[M. Martin and P. Mattila, simplified]\label{mm} Assume that $p \in \N$, and for every multi-index $(i_{1},\ldots,i_{n}) \in \{1,\ldots,p\}^{n}$ we are given a closed interval $I_{i_{1},\ldots,i_{n}} \subset \R$. Of these intervals, we assume the following:
\begin{itemize}
\item[(i)] $I_{i_{1},\ldots,I_{n},j} \subset I_{i_{1},\ldots,i_{n}}$ for all $j \in \{1,\ldots,p\}$;
\item[(ii)] $I_{i_{1},\ldots,i_{n},i} \cap I_{i_{1},\ldots,i_{n},j}$ for $i \neq j$;
\item[(iii)] $\max\{\ell(I_{i_{1},\ldots,i_{n}}) : (i_{1},\ldots,i_{n}) \in \{1,\ldots,p\}^{n}\} \to 0$ as $n \to \infty$; 
\item[(iv)]
\begin{displaymath} \sum_{j = 1}^{p} \ell(I_{i_{1},\ldots,i_{n},j})^{s} = \ell(I_{i_{1},\ldots,i_{n}})^{s}. \end{displaymath} 
\end{itemize}
Then
\begin{displaymath} 0 < \calH^{s}\left(\bigcap_{n = 0}^{\infty}\bigcup_{(i_{1},\ldots,i_{n})} I_{i_{1},\ldots,i_{n}}\right) < \infty. \end{displaymath}
\end{proposition}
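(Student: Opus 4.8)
The plan is to carry out the classical two‑sided estimate for a Moran‑type construction: cover $K := \bigcap_{n}\bigcup_{(i_{1},\ldots,i_{n})}I_{i_{1},\ldots,i_{n}}$ efficiently by cylinders for the upper bound, and build a Frostman measure on $K$ for the lower bound. (I read condition (ii), clearly, as asserting that $I_{i_{1},\ldots,i_{n},i}$ and $I_{i_{1},\ldots,i_{n},j}$ are disjoint for $i \neq j$, so that together with (i) this is a genuine Cantor‑type construction and, by induction, any two cylinders are nested or disjoint.)

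By condition (iv) the $s$‑th powers of the lengths of the $p$ children of a cylinder sum to the $s$‑th power of the length of the parent, so the standard mass‑distribution construction produces a Borel measure $\mu$, supported on $K$, with $\mu(I_{i_{1},\ldots,i_{n}}) = \ell(I_{i_{1},\ldots,i_{n}})^{s}$; in particular $\mu(K) = \ell(I_{\emptyset})^{s}$, which is positive unless $I_{\emptyset}$ is a point, in which case there is nothing to prove. For the upper bound, iterating (iv) gives $\sum_{(i_{1},\ldots,i_{n})}\ell(I_{i_{1},\ldots,i_{n}})^{s} = \ell(I_{\emptyset})^{s}$ for every $n$, while by (iii) the generation‑$n$ cylinders have lengths tending to $0$; hence for $\delta>0$ and $n$ large they form an admissible $\delta$‑cover of $K$, so $\calH^{s}_{\delta}(K) \leq \ell(I_{\emptyset})^{s}$, and therefore $\calH^{s}(K) \leq \ell(I_{\emptyset})^{s} < \infty$.

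For the lower bound I would invoke the mass distribution principle: it suffices to find a finite constant $C$ with $\mu(U) \leq C\,\ell(U)^{s}$ for every interval $U \subset \R$, since then $\calH^{s}(K) \geq \mu(K)/C > 0$. To prove this Frostman‑type bound, fix $U$ with $\ell(U) < \ell(I_{\emptyset})$ and let $\calS$ be the collection of cylinders $I$ with $I \cap U \neq \emptyset$, $\ell(I) \leq \ell(U)$, and $\ell(\hat I) > \ell(U)$, where $\hat I$ is the parent of $I$. Then $\calS$ is an antichain of pairwise disjoint cylinders, each member lies in the concentric triple $3U$, and their union covers $K \cap U$ (for $x \in K\cap U$, follow the chain of cylinders containing $x$ until its length first drops below $\ell(U)$); hence $\mu(U) \leq \sum_{I \in \calS}\ell(I)^{s}$. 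To bound this sum I would group the members of $\calS$ by their parents and arrange the parents into descending chains $Q_{0} \supsetneq Q_{1} \supsetneq \cdots$; along each chain, (iv) bounds the total $s$‑content of the $\calS$‑cylinders branching off at $Q_{k}$ by the lost content $\ell(Q_{k})^{s} - \ell(Q_{k+1})^{s}$, so these contributions telescope and stay comparable to $\ell(U)^{s}$, while a finitary geometric count --- using that $\calS$ is confined to $3U$ and that every cylinder has exactly $p$ children --- keeps the number of relevant chains bounded in terms of $p$. Assembling these estimates gives $\sum_{I \in \calS}\ell(I)^{s} \leq C\,\ell(U)^{s}$.

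The hard part will be precisely this Frostman estimate, and within it the treatment of cylinders straddling an endpoint of $U$: such a cylinder may be an ancestor whose length is far larger than $\ell(U)$, and a priori such an ancestor could, at each of its arbitrarily many generations, shed a tiny child into $U$, so the crude inequality $\sum_{I \in \calS}\ell(I) \leq 3\,\ell(U)$ is worthless when $s < 1$. The feature that rescues the estimate --- and the real content of the statement --- is the rigidity of (iv): because $\mu$ is forced to equal the $s$‑dimensional content $\ell(\,\cdot\,)^{s}$ on every cylinder at every scale, the mass shed to the small children of an ancestor at a given level is exactly the content lost in descending one level along the chain pointing towards $U$, so these masses telescope and cannot accumulate beyond the content of the topmost cylinder of the chain. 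This is in essence the Martin--Mattila theorem being cited, so in practice I would either appeal to their proof or write out the telescoping estimate above in full, paying particular attention to the case $\ell(\hat I) \gg \ell(U)$.
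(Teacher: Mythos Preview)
The paper does not prove this proposition; it is quoted from \cite{MM} and used as a black box in the proof of Lemma~\ref{lemma10}. So there is no ``paper's own proof'' to compare against, and your sketch already goes further than the paper does.

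Your upper bound is correct and complete. For the lower bound, the gap is precisely where you anticipate it. Telescoping along a chain $Q_{0}\supsetneq Q_{1}\supsetneq\cdots\supsetneq Q_{m}$ of parents yields
\begin{displaymath}
\sum_{k=0}^{m-1}\bigl(\ell(Q_{k})^{s}-\ell(Q_{k+1})^{s}\bigr)+\ell(Q_{m})^{s}=\ell(Q_{0})^{s},
\end{displaymath}
and nothing in your argument forces $\ell(Q_{0})^{s}$ to be comparable to $\ell(U)^{s}$: $Q_{0}$ can perfectly well be $I_{\emptyset}$, or any cylinder far larger than $U$. Your ``finitary geometric count'' controls only the \emph{number} of maximal chains (and indeed there are at most two, one through each endpoint of $U$, since any parent has length exceeding $\ell(U)$ and meets $U$, hence contains an endpoint of $U$); it does not control the contribution of a single chain. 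Without a bounded--ratio hypothesis such as $\ell(\hat I)\leq C\,\ell(I)$ --- which (i)--(iv) do not provide --- a chain through an endpoint of $U$ may contain arbitrarily many parents above scale $\ell(U)$, each shedding up to $p-1$ children into $\calS$, and the crude telescoped bound $\ell(Q_{0})^{s}$ is then useless. Converting this into a genuine bound $\mu(U)\leq C\,\ell(U)^{s}$ is exactly the content of the Martin--Mattila argument that you defer to in your last sentence; as written, the proposal identifies the difficulty correctly but does not overcome it.
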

\begin{proof}[Proof of Lemma \ref{lemma10}] The set $D_{x}(G)$ can easily be expressed in a form covered by the proposition. Let $p = \card \calG_{1} \in \N$, and write $\calG_{1} = \{B_{1},\ldots,B_{p}\}$. Then, define $I_{j} = d_{x}(B_{j}) = \{|x - y| : y \in B_{j}\}$. Each ball $B_{j} \in \calG_{1}$ again contains exactly $p$ balls $B_{j1},\ldots,B_{jp} \in \calG_{2}$, and we define $I_{ji} = d_{x}(B_{ji})$ for $1 \leq i \leq p$. It is clear how to continue this process. With the intervals $I_{i_{1},\ldots,i_{n}}$ so defined, let us start verifying the conditions (i)--(iv). There is nothing to prove in (i) and (iii). Also, (iv) is easy, since the balls $B^{1},\ldots,B^{p} \in \calG_{n + 1}$ inside any ball $B^{0} \in \calG_{n}$ satisfy
\begin{displaymath} \sum_{j = 1}^{p} d(B^{j})^{s} = d(B^{0})^{s} \end{displaymath}
for some $s > 1 - \varepsilon$, and $\ell(d_{x}(B)) = d(B)$ for any ball $B \subset \R^{2}$. To establish (ii), we use Proposition \ref{mainP1} and \eqref{form3}. Assume that some intervals $I$ and $I'$ of generation $n \in \N$ overlap, and fix $t \in I \cap I'$. Then $I = d_{x}(B)$ and $I' = d_{x}(B')$ for some balls $B,B' \in \calG_{n}$, and we may find points $y \in B$ and $y' \in B'$ such that $|x - y| = t = |x - y'|$. Then $y \in S(x,|x - y'|) \cap B_{0} \subset \calC(y',L_{\xi},\alpha)$ by \eqref{form3}. But this means that $y \in B \cap \calC(y',L_{\xi},\alpha)$, which is only possible if $B = B'$, by Proposition \ref{mainP1}. Thus $I$ and $I'$ are the same generation $n$ intervals. We have now verified all the conditions of Proposition \ref{mm}, whence
\begin{displaymath} 0 < \calH^{s}(D_{x}(G)) = \calH^{s}\left(\bigcap_{n = 0}^{\infty}\bigcup_{(i_{1},\ldots,i_{n})} I_{i_{1},\ldots,i_{n}}\right) < \infty \end{displaymath}
for some $s > 1 - \varepsilon$. 
\end{proof}

\begin{proof}[Completion of the proof of Theorem \ref{main}] We use all the notation introduced above. Since $G \subset K$, we have now proven that $\dim D_{x}(K) > 1 - \varepsilon$, if $x \in L_{e}$ has norm at least $r_{\alpha}$. We will now demonstrate that $D_{x_{o}}(K)$ contains a scaled copy of $D_{x}(K)$ for some such $x \in L_{e}$. By definition of $y_{o} \in K_{0}$, there exists a similitude $\psi_{y_{o}} := \psi_{i_{1}} \circ \ldots \circ \psi_{i_{k}}$, $1 \leq i_{j} \leq q$, with type $[\psi_{y_{o}}] = [\Id,\Id]$ taking the origin to $y_{o}$. The similitude $\psi_{j}$ mapping the origin to itself (recall the discussion in Definition \ref{K0}) has rational type, so the $m$-fold composition $\psi_{j}^{m}$ of $\psi_{j}$ with itself also has type $[\Id,\Id]$ for some $m \in \N$, by Lemma \ref{typeId}. Let $\psi^{n}$ denote the similitude obtained by composing $\psi_{y_{o}}$ with $n$ copies of $\psi_{j}^{m}$, that is, $\psi^{n} = \psi_{y_{o}} \circ \psi_{j}^{m} \circ \ldots \circ \psi_{j}^{m}$. Then $[\psi^{n}] = [\Id,\Id]$ and $\psi^{n}(0) = y_{o}$ for all $n \in \N$. This means that $\psi^{n}$ has the form $\psi^{n}(x) = \rho_{n}x + y_{o}$ for some constant $\rho_{n}$,  and $\rho_{n} \to 0$ as $n \to \infty$. The point $x_{o}$ lies on the line passing through $y_{o}$ in direction $e$. Hence, $x_{n} := (\psi^{n})^{-1}(x_{o}) \in L_{e}$, and $|x_{n}| = \rho_{n}^{-1}|x_{o} - y_{o}| \to \infty$, as $n \to \infty$. Writing $K_{n} := \psi^{n}(K) \subset K$, we have
\begin{align*} D_{x_{o}}(K) \supset D_{x_{o}}(K_{n}) & = D_{x_{o}}(\psi^{n}(K)) = \{|x_{o} - \psi^{n}(x)| : x \in K\}\\
& = \{\rho_{n}|x_{n} - x| : x \in K\} = \rho_{n} \cdot D_{x_{n}}(K).  \end{align*} 
Since $\dim D_{x_{n}}(K) > 1 - \varepsilon$ for $n \in \N$ large enough, the previous inclusions show that also $\dim D_{x_{o}}(K) > 1 - \varepsilon$. The proof of Theorem \ref{main} is complete.
\end{proof}

\section{Open problems}

\begin{itemize}
\item[(i)] Does $\dim K > 1$ imply positive length for $D(K)$? 
\item[(ii)] Could the hypothesis $\calH^{1}(K) > 0$ be weakened to $\dim K \geq 1$? This would only require improving Lemma \ref{directions}, see Remark \ref{directionsRemark}.

\end{itemize}

\end{document}